\tikzset{
  c/.style={every coordinate/.try}
}
\tikzstyle arrowstyle=[scale=1]
\tikzstyle directed=[postaction={decorate,decoration={markings,mark=at position 0.6 with {\arrow[arrowstyle]{stealth};}}}]
\tikzstyle reverse directed=[postaction={decorate,decoration={markings,mark=at position 0.4 with {\arrowreversed[arrowstyle]{stealth};}}}]
\tikzstyle dot=[style={circle,inner sep=1pt,fill}]
\DeclareMathOperator{\asc}{\mathrm{asc}}
\newcommand\p{\circle*{0.2}}
\newtheorem{theorem}{Theorem}
\newtheorem{lemma}{Lemma}
\title{Word-representability of triangulations of rectangular polyomino with a single domino tile}
\author{Marc Glen\thanks{School of Computer and Information Sciences, University of Strathclyde, Glasgow, G1 1HX, UK. \newline Email: marc.glen.2013@uni.strath.ac.uk}\ \ and Sergey Kitaev\thanks{School of Computer and Information Sciences, University of Strathclyde, Glasgow, G1 1HX, UK. \newline Email: sergey.kitaev@cis.strath.ac.uk}}
\begin{document}

\maketitle
\thispagestyle{empty}

\begin{abstract}  A graph $G=(V,E)$ is word-representable if there exists a word $w$ over the alphabet $V$ such that letters $x$ and $y$ alternate in $w$ if and only if $(x,y)$ is an edge in $E$. 

A recent elegant result of Akrobotu et al.~\cite{Akrobotu} states that a triangulation of any convex polyomino is word-representable if and only if it is 3-colourable. In this paper, we generalize a particular case of this result by showing that the result of Akrobotu et al.~\cite{Akrobotu} is true even if we allow a domino tile, instead of having just $1\times 1$ tiles on a rectangular polyomino. \\

\noindent
{\bf Keywords:} word-representability, polyomino, triangulation, domino \end{abstract}

\section{Introduction}\label{intro}

Suppose that $w$ is a word and $x$ and $y$ are two distinct letters in $w$.
We say that $x$ and $y$ {\it alternate} in $w$ if the deletion of all other
letters from the word $w$ results in either $xyxy\cdots$ or $yxyx\cdots$.

A graph $G=(V,E)$ is word-representable if there exists a word $w$ over the alphabet $V$ such that letters $x$ and $y$ alternate in $w$ if and only if $(x,y)$ is an edge in $E$. For example, the cycle graph on 4 vertices labeled by 1, 2, 3 and 4 in clockwise direction can be represented by the word 14213243.  There is a long line of research on word-representable graphs, which is summarized in the upcoming book~\cite{KL}.

%end commenting-out semi-transitive info

A graph is $k$-colourable if its vertices can be coloured in at most $k$ colours so that no pair of vertices having the same colour is connected by an edge. %A direct corollary to Theorem~\ref{thm:semi-trans} is the following statement.

\begin{theorem}[\cite{HKP2015}]\label{thm:3col} 
All $3$-colourable graphs are word-representable.
\end{theorem}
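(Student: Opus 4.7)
The plan is to use the semi-transitive orientation characterisation of word-representable graphs established in \cite{HKP2015}: $G$ is word-representable if and only if its edges admit an acyclic orientation containing no ``shortcut'', i.e.\ no directed path $v_1\to v_2\to\cdots\to v_k$ accompanied by an arc $v_1\to v_k$ while some arc $v_i\to v_j$ with $1\le i<j\le k$ is missing. Crucially, any shortcut must involve at least four vertices.

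Given a proper 3-colouring $c:V\to\{1,2,3\}$ of $G$, I would orient every edge $uv\in E$ from the endpoint of smaller colour to the endpoint of larger colour; this is well defined because adjacent vertices receive distinct colours. Acyclicity is immediate, since $c$ strictly increases along any directed edge and therefore cannot return to its starting value around a cycle.

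The main step is then to rule out shortcuts. Along any directed path $v_1\to v_2\to\cdots\to v_k$ the colours $c(v_1)<c(v_2)<\cdots<c(v_k)$ form a strictly increasing sequence drawn from the three-element set $\{1,2,3\}$, forcing $k\le 3$. Since a shortcut requires $k\ge 4$, none can exist, so the orientation is semi-transitive by default, and word-representability of $G$ follows from \cite{HKP2015}.

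There is no substantial combinatorial obstacle: once one chooses the ``colour-increasing'' orientation, the whole argument collapses to the trivial inequality $k\le 3$. The proof also makes transparent why this method does not extend to $k$-colourable graphs with $k\ge 4$, as then one can produce directed paths on four or more vertices, and shortcut-freeness is no longer automatic --- indeed it is known that not every 4-colourable graph is word-representable.
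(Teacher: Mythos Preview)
Your argument is correct: orienting every edge from the smaller colour class to the larger one yields an acyclic orientation in which every directed path has at most three vertices, so no shortcut (which needs at least four) can occur, and the semi-transitive orientation criterion of \cite{HKP2015} gives word-representability.

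As for comparison with the paper: there is nothing to compare, because the paper does not prove Theorem~\ref{thm:3col} at all. It is quoted as a known result from \cite{HKP2015} and used as a black box. Your proof is in fact the standard one underlying that reference, so you have supplied exactly the argument the paper defers to.
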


We note that, for $k\geq 4$, there are examples of non-word-representable graphs that are $k$-colourable, but not 3-colourable. For example, the wheel $W_5$ on 6 vertices is such a graph.

A {\em polyomino} is a plane geometric figure formed by joining one or more equal squares edge to edge. Letting corners of squares in a polyomino be vertices, we can treat polyominoes as graphs. In particular, well-known {\em grid graphs} are obtained from polyominoes in this way. A particular class of graphs of our interest is related to {\em convex polyominoes}. A polyomino is said to be {\em column convex} if its intersection with any vertical line is convex (in other words, each column has no holes). Similarly, a polyomino is said to be {\em row convex} if its intersection with any horizontal line is convex. Finally, a polyomino is said to be convex if it is row and column convex.

We are interested in {\em triangulations} of a polyomino. Note that no triangulation is 2-colourable -- at least three colours are needed to colour properly a triangulation, while four colours are always enough to colour any triangulation, as it is a planar graph well-known to be 4-colourable. 

\begin{figure}[h]
\begin{center}
\begin{picture}(6,4.5)

\put(-5,0){

% Graph $T_1$

\put(-2.5,1.8){$T_1$=}

%\put(-1,4.3){1} \put(1.3,4.3){2} \put(4.3,4.3){3} 
%\put(-1,2.3){4} \put(1.3,2.3){5} \put(4.3,2.3){6} 
%\put(-1,-0.8){7} \put(1.3,-0.8){8} \put(4.3,-0.8){9} 

\put(0,4){\p} \put(2,4){\p} \put(4,4){\p} 
\put(0,2){\p} \put(2,2){\p} \put(4,2){\p} 
\put(0,0){\p} \put(2,0){\p} \put(4,0){\p} 

\put(0,0){\line(1,0){4}}
\put(0,2){\line(1,0){4}}
\put(0,4){\line(1,0){4}}
\put(0,0){\line(0,1){4}}
\put(2,0){\line(0,1){4}}
\put(4,0){\line(0,1){4}}

\put(0,0){\line(1,1){2.1}}
\put(0,2){\line(1,1){2.1}}
\put(2,0){\line(1,1){2.1}}
\put(2,4){\line(1,-1){2.1}}

}

\put(7,0){

% Graph $T_2$

\put(-2.5,1.8){$T_2$=}

%\put(-1,4.3){1} \put(1.3,4.3){2} \put(4.3,4.3){3} 
%\put(-1,2.3){4} \put(1.3,1.2){5} \put(4.3,2.3){6} 
%\put(-1,-0.8){7} \put(1.3,-0.8){8} \put(4.3,-0.8){9} 

\put(0,4){\p} \put(2,4){\p} \put(4,4){\p} 
\put(0,2){\p} \put(2,2){\p} \put(4,2){\p} 
\put(0,0){\p} \put(2,0){\p} \put(4,0){\p} 

\put(0,0){\line(1,0){4}}
\put(0,2){\line(1,0){4}}
\put(0,4){\line(1,0){4}}
\put(0,0){\line(0,1){4}}
\put(2,0){\line(0,1){4}}
\put(4,0){\line(0,1){4}}

\put(0,2){\line(1,-1){2.1}}
\put(0,4){\line(1,-1){4}}
\put(2,2){\line(1,1){2.1}}

}

\end{picture}
\caption{Non-3-colourable and non-word-representable graphs $T_1$ and $T_2$.} \label{non-repr-triang}
\end{center}
\end{figure}
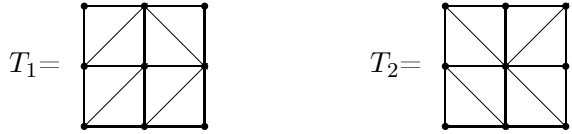

Not all triangulations of a polyomino are 3-colourable: for example, see Figure~\ref{non-repr-triang} coming from \cite{Akrobotu} for non-3-colourable triangulations, which are the only such triangulations, up to rotation, of a $3 \times 3$ grid graph. The main result in~\cite{Akrobotu} is the following theorem.

\begin{theorem}[\cite{Akrobotu}]\label{main-thm-Akrobotu} A triangulation of a convex polyomino is word-representable if and only if it is $3$-colourable. In particular, this result holds for polyominoes of rectangular shape. \end{theorem}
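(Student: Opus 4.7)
The plan is to prove the two directions separately, with the ``if'' direction being essentially free and the ``only if'' direction being the substantive part. A triangulation of a convex polyomino is just a graph, so if it is $3$-colourable then Theorem~\ref{thm:3col} immediately yields word-representability.

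For the converse, I would argue by contrapositive: starting from a non-$3$-colourable triangulation of a convex polyomino, I need to exhibit non-word-representability. The natural tool is the fact that word-representability is preserved under taking induced subgraphs (restricting a representing word to any subset of the alphabet still represents the induced subgraph). Consequently, it suffices to find an induced subgraph that is not word-representable, and my proposed candidates are precisely the two $3 \times 3$ patterns $T_1$ and $T_2$ of Figure~\ref{non-repr-triang}. The argument then splits into two claims. \textbf{Claim A:} each of $T_1$ and $T_2$ is itself non-word-representable --- these are $9$-vertex graphs and can be handled by a finite case analysis (e.g., by showing directly from the definition that any candidate word must produce a forbidden non-alternating pair), or in practice by a computer check. \textbf{Claim B:} any non-$3$-colourable triangulation of a convex polyomino contains an induced copy of $T_1$ or $T_2$.

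For Claim B I would do a structural analysis of the triangulated cells. In any tentative $3$-colouring, fixing the colour of one corner of a cell forces the colours of the other three corners through the triangle constraints determined by the chosen diagonal, and this assignment then propagates to neighbouring cells. A conflict can arise only when a small block of cells has an incompatible combination of diagonals, and enumerating these local obstructions within a $3 \times 3$ subgrid should show that each one is, up to rotation or reflection, an induced copy of $T_1$ or $T_2$. The main obstacle I expect is the exhaustiveness of this case analysis, and in particular ruling out non-local obstructions that span more than a $3 \times 3$ window: the row- and column-convexity hypothesis is essential here, since it forces each row and column to be a contiguous run of cells and should confine any non-$3$-colourable pattern to a $3 \times 3$ local neighbourhood that matches $T_1$ or $T_2$.
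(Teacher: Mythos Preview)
Your proposal is correct and matches the approach the paper attributes to~\cite{Akrobotu}: the paper does not prove Theorem~\ref{main-thm-Akrobotu} itself but summarises its proof as showing that every non-$3$-colourable triangulation of a convex polyomino contains $T_1$ or $T_2$ as an induced subgraph, which is precisely your Claim~B, with the forward direction coming from Theorem~\ref{thm:3col} as you say. The propagation-style case analysis you sketch for Claim~B is also the template the present paper uses in its own Lemma~\ref{lemma-grid} for the domino variant (colour row by row, locate the first vertex forced to receive colour~$4$, and classify the local configuration), so your plan is on the right track; the only thing left unspecified is the exhaustive enumeration itself, which you correctly flag as the main work.
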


The basic idea of the proof of Theorem~\ref{main-thm-Akrobotu} was to show that any non-3-colourable triangulation of a convex polyomino contains a graph in Figure~\ref{non-repr-triang} as an induced subgraph. As is shown in \cite{Akrobotu}, Theorem~\ref{main-thm-Akrobotu} is not true for non-convex polyominoes. 

Inspired by the elegant result recorded in Theorem~\ref{main-thm-Akrobotu}, in this paper we consider the following variation of the problem. Polyominoes are objects formed by $1\times 1$ tiles, so that the induced graphs in question have only (chordless) cycles of length 4. A generalization of such graphs is allowing domino ($1\times 2$ or $2\times 1$) tiles to be present in polyominoes, so that in the respective induced graphs (chordless) cycles of length 6 would be allowed. We call these graphs {\em polyominoes with domino tiles}. The problem is then in characterizing those triangulations of such graphs  that are word-representable. See Figure~\ref{graphs-with-dominoes} for an example of a polyomino (of rectangular shape) with domino tiles (to the left) and one of its triangulations (to the right). 

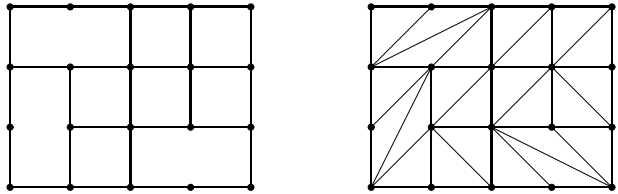
\begin{figure}[h]
\begin{center}
\begin{picture}(20,6)

\put(0,0){

\put(0,0){\p} \put(0,2){\p} \put(0,4){\p} \put(0,6){\p} 
\put(2,0){\p} \put(2,2){\p} \put(2,4){\p} \put(2,6){\p} 
\put(4,0){\p} \put(4,2){\p} \put(4,4){\p} \put(4,6){\p} 
\put(6,0){\p} \put(6,2){\p} \put(6,4){\p} \put(6,6){\p} 
\put(8,0){\p} \put(8,2){\p} \put(8,4){\p} \put(8,6){\p} 

\put(0,0){\line(1,0){8}}
\put(0,4){\line(1,0){8}}
\put(0,6){\line(1,0){8}}
\put(2,2){\line(1,0){6}}

\put(0,0){\line(0,1){6}}
\put(2,0){\line(0,1){4}}
\put(4,0){\line(0,1){6}}
\put(8,0){\line(0,1){6}}
\put(6,2){\line(0,1){4}}

}

\put(12,0){

\put(0,0){\p} \put(0,2){\p} \put(0,4){\p} \put(0,6){\p} 
\put(2,0){\p} \put(2,2){\p} \put(2,4){\p} \put(2,6){\p} 
\put(4,0){\p} \put(4,2){\p} \put(4,4){\p} \put(4,6){\p} 
\put(6,0){\p} \put(6,2){\p} \put(6,4){\p} \put(6,6){\p} 
\put(8,0){\p} \put(8,2){\p} \put(8,4){\p} \put(8,6){\p} 

\put(0,0){\line(1,0){8}}
\put(0,4){\line(1,0){8}}
\put(0,6){\line(1,0){8}}
\put(2,2){\line(1,0){6}}

\put(0,0){\line(0,1){6}}
\put(2,0){\line(0,1){4}}
\put(4,0){\line(0,1){6}}
\put(8,0){\line(0,1){6}}
\put(6,2){\line(0,1){4}}

\put(0,0){\line(1,1){2}}
\put(0,0){\line(1,2){2}}
\put(0,2){\line(1,1){2}}

\put(2,2){\line(1,1){4}}
\put(4,2){\line(1,1){4}}
\put(0,4){\line(1,1){2}}
\put(2,4){\line(1,1){2}}
\put(2,2){\line(1,-1){2}}
\put(4,2){\line(1,-1){2}}
\put(6,2){\line(1,-1){2}}
\put(4,2){\line(2,-1){4}}
\put(6,4){\line(1,-1){2}}
\put(0,4){\line(2,1){4}}

}

\end{picture}
\caption{A polyomino with domino tiles (to the left) and one of its triangulations (to the right).}\label{graphs-with-dominoes}
\end{center}
\end{figure}

In this paper, we are interested in triangulations of rectangular polyominoes with a single domino tile. Our main result is the following generalization of the case of rectangular shapes in Theorem~\ref{main-thm-Akrobotu}. 

\begin{theorem}\label{main-thm} A triangulation of a rectangular polyomino with a single domino tile is word-representable if and only if it is $3$-colourable. \end{theorem}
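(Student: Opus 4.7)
The easy direction---that a $3$-colourable triangulation is word-representable---is immediate from Theorem~\ref{thm:3col}, so I focus on the converse. My plan is to prove the contrapositive: every non-$3$-colourable triangulation $T$ of a rectangular polyomino with a single domino tile contains a non-word-representable graph as an induced subgraph. Label the six boundary vertices of the domino cyclically as $a,b,c,d,e,f$, with $b$ and $e$ the midpoints of the two long sides; the chord $be$ is the unique chord of the hexagon that would split the domino back into two unit squares.

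The first step would be to handle the case in which $be$ is a chord of $T$. There $T$ is simultaneously a triangulation of the ordinary rectangular polyomino obtained by splitting the domino at $be$, so Theorem~\ref{main-thm-Akrobotu} immediately supplies an induced $T_1$ or $T_2$. Otherwise, I would enumerate all triangulations of the hexagon that use neither $be$ nor the two ``degenerate'' chords $ac$ and $df$ (which are collinear with the hexagon's long edges through $b$ and $e$ respectively). A brief Catalan-number count yields exactly two such triangulations, $\{ad,ae,bd\}$ and $\{bf,ce,cf\}$, which are horizontal mirror images of one another, so it suffices to consider the first. In this triangulation both $b$ and $e$ have degree $3$ inside the hexagon.

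Next I would invoke the classical fact that a triangulation of a topological disc is $3$-colourable iff every interior vertex has even degree, so non-$3$-colourability is witnessed by some interior vertex $v$ of odd degree. If $v$ lies away from the domino, the local picture around $v$ matches exactly that of a purely rectangular triangulation, and Theorem~\ref{main-thm-Akrobotu} still yields an induced $T_1$ or $T_2$. The genuinely new case is when $v$ is one of $b$ or $e$: its total degree is $4+k$, where $k\in\{0,1,2\}$ counts the diagonals of the two cells on the opposite long side that are incident to $v$, so odd degree forces $k=1$. For each of the two resulting configurations (the incident diagonal being in one adjacent cell but not the other), I would verify by direct inspection that the five neighbours of $v$ induce a $5$-cycle---traced along the long diagonal $ad$ (or $bd$) together with grid edges and the single incident cell diagonal---with no additional chord. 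Consequently the closed neighbourhood of $v$ is an induced copy of the wheel $W_5$, which is non-word-representable by the remark in the introduction, and so $T$ is non-word-representable.

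The main obstacle is this final verification: one must check, in every configuration (horizontal vs.\ vertical orientation of the domino, position of the domino relative to the polyomino boundary, and all choices of diagonals in the cells adjacent to $v$), that the proposed five neighbours really do induce a $5$-cycle with no stray chord contributed from elsewhere in $T$. A related subtlety is the case where the odd-degree offender is one of the hexagon corners $a,c,d,f$---possible only when the domino sits in the interior of the polyomino---which I expect to be handled by analogous arguments combined with the symmetries of the rectangle.
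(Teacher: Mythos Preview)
Your plan is sound and leads to a correct proof, but it follows a genuinely different route from the paper's. The paper proves the hard direction via Lemma~\ref{lemma-grid}: it runs a greedy $3$-colouring row by row, locates the first vertex $v$ requiring colour~$4$, and then carries out an extensive case analysis (eight situations $S_1$--$S_8$, each with several subcases) to exhibit one of twelve specific minimal obstructions $T_1,T_2,A_1,\ldots,A_8,B_1,B_2$ as an induced subgraph. You instead invoke the classical parity criterion for $3$-colourability of near-triangulations and go straight for an odd wheel centred at an odd-degree interior vertex---precisely the structure the paper uses \emph{afterwards} to certify that its twelve obstructions are non-word-representable. So your argument short-circuits the intermediate catalogue: the paper's approach buys an explicit list of minimal forbidden configurations, while yours buys a cleaner conceptual picture with far fewer cases.

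Two places need more than you currently give them. First, the hexagon-corner case $v\in\{a,c,d,f\}$ is not a throwaway subtlety. In the triangulation $\{ad,ae,bd\}$, the interior degrees are $6+k$ at $a$ (and at $d$) and $4+k$ at $c$ (and at $f$), with $k\in\{0,1,2,3\}$ counting incident diagonals from the three adjacent unit cells; for each odd $k$ you must actually trace the $5$-, $7$-, or $9$-cycle through the link of $v$ and verify it is chordless. This does go through---the diagonal $ad$ (or $bd$, $cf$, as appropriate) closes the cycle across the domino, and the absent edge $be$ prevents any shortcut---but it is a genuine verification, comparable in effort to your $v\in\{b,e\}$ case, not a consequence of symmetry alone. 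Second, your reduction of the ``$v$ away from the domino'' case to Theorem~\ref{main-thm-Akrobotu} is better phrased locally: rather than appealing to that theorem as a black box, observe directly that if $v$ is not a hexagon vertex then all four cells around $v$ are unit squares, so the $3\times 3$ block centred at $v$ is literally a rotation of $T_1$ or $T_2$ (according as one or three of the four diagonals pass through $v$), and no hexagon diagonal can have both endpoints in that block, so the copy is induced.
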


The first observation to make is that without loss of generality, we can assume that the single domino tile is horizontal, since otherwise, we can always rotate our rectangular polyomino 90 degrees; rotation of a shape, or taking the mirror image of it with respect to a line  are called by us {\em trivial transformations}. While the strategy below to prove Theorem~\ref{main-thm} is similar to proving Theorem~\ref{main-thm-Akrobotu}, we have to deal with many more cases resulting in 12 (non-equivalent up to trivial transformations) non-3-colourable and non-word-representable minimal graphs (which include the graphs $T_1$ and $T_2$ in Figure~\ref{non-repr-triang}) instead of just two. All these graphs, except for $T_1$ and $T_2$, are listed in Figure~\ref{non-repr-minim-triang}. 

\begin{figure}[h]
\begin{center}
\begin{picture}(27,5)

\put(0,3){

\put(-2.5,1){$A_1$=}

                    \put(1,2){\p} \put(2,2){\p} \put(3,2){\p}
\put(0,1){\p} \put(1,1){\p} \put(2,1){\p} \put(3,1){\p}
\put(0,0){\p} \put(1,0){\p} \put(2,0){\p} \put(3,0){\p}

\put(0,0){\line(1,0){3}}
\put(0,1){\line(1,0){3}}
\put(1,2){\line(1,0){2}}
\put(3,0){\line(0,1){2}}
\put(2,0){\line(0,1){2}}
\put(1,1){\line(0,1){1}}
\put(0,0){\line(0,1){1}}

\put(3,0){\line(-1,1){1}}
\put(2,1){\line(-2,-1){2}}
\put(1,1){\line(-1,-1){1}}
\put(2,1){\line(-1,-1){1}}
\put(2,1){\line(1,1){1}}
\put(2,1){\line(-1,1){1}}

}

\put(7,3){

\put(-2.5,1){$A_2$=}

                    \put(1,2){\p} \put(2,2){\p} \put(3,2){\p}
\put(0,1){\p} \put(1,1){\p} \put(2,1){\p} \put(3,1){\p}
\put(0,0){\p} \put(1,0){\p} \put(2,0){\p} \put(3,0){\p}

\put(0,0){\line(1,0){3}}
\put(0,1){\line(1,0){3}}
\put(1,2){\line(1,0){2}}
\put(3,0){\line(0,1){2}}
\put(2,0){\line(0,1){2}}
\put(1,1){\line(0,1){1}}
\put(0,0){\line(0,1){1}}

\put(3,0){\line(-1,1){1}}
\put(2,1){\line(-2,-1){2}}
\put(1,1){\line(-1,-1){1}}
\put(2,1){\line(-1,-1){1}}
\put(2,2){\line(-1,-1){1}}
\put(2,2){\line(1,-1){1}}

}

\put(14,3){

\put(-2.5,1){$A_3$=}

                    \put(1,2){\p} \put(2,2){\p} \put(3,2){\p}
\put(0,1){\p} \put(1,1){\p} \put(2,1){\p} \put(3,1){\p}
\put(0,0){\p} \put(1,0){\p} \put(2,0){\p} \put(3,0){\p}

\put(0,0){\line(1,0){3}}
\put(0,1){\line(1,0){3}}
\put(1,2){\line(1,0){2}}
\put(3,0){\line(0,1){2}}
\put(2,0){\line(0,1){2}}
\put(1,1){\line(0,1){1}}
\put(0,0){\line(0,1){1}}

\put(3,0){\line(-1,1){1}}
\put(0,1){\line(1,-1){1}}
\put(0,1){\line(2,-1){2}}
\put(1,1){\line(1,-1){1}}
\put(1,2){\line(1,-1){1}}
\put(3,2){\line(-1,-1){1}}

}

\put(21,3){

\put(-2.5,1){$A_4$=}

                    \put(1,2){\p} \put(2,2){\p} \put(3,2){\p}
\put(0,1){\p} \put(1,1){\p} \put(2,1){\p} \put(3,1){\p}
\put(0,0){\p} \put(1,0){\p} \put(2,0){\p} \put(3,0){\p}

\put(0,0){\line(1,0){3}}
\put(0,1){\line(1,0){3}}
\put(1,2){\line(1,0){2}}
\put(3,0){\line(0,1){2}}
\put(2,0){\line(0,1){2}}
\put(1,1){\line(0,1){1}}
\put(0,0){\line(0,1){1}}

\put(3,0){\line(-1,1){1}}
\put(0,1){\line(1,-1){1}}
\put(0,1){\line(2,-1){2}}
\put(1,1){\line(1,-1){1}}
\put(2,2){\line(1,-1){1}}
\put(2,2){\line(-1,-1){1}}

}

\put(28,3){

\put(-2.5,1){$A_5$=}

\put(0,2){\p} \put(1,2){\p} \put(2,2){\p} \put(3,2){\p}
\put(0,1){\p} \put(1,1){\p} \put(2,1){\p} \put(3,1){\p}
                    \put(1,0){\p} \put(2,0){\p} \put(3,0){\p}
 
\put(0,2){\line(1,0){3}}
\put(0,1){\line(1,0){3}}
\put(1,0){\line(1,0){2}}
\put(3,0){\line(0,1){2}}
\put(2,0){\line(0,1){2}}
\put(1,0){\line(0,1){1}}
\put(0,1){\line(0,1){1}}

\put(3,0){\line(-1,1){1}}
\put(0,1){\line(1,1){1}}
\put(0,1){\line(2,1){2}}
\put(1,1){\line(1,1){1}}
\put(1,1){\line(1,-1){1}}
\put(2,2){\line(1,-1){1}}

}

\put(0,0){

\put(-2.5,1){$A_6$=}

\put(0,2){\p} \put(1,2){\p} \put(2,2){\p} \put(3,2){\p}
\put(0,1){\p} \put(1,1){\p} \put(2,1){\p} \put(3,1){\p}
                    \put(1,0){\p} \put(2,0){\p} \put(3,0){\p}
 
\put(0,2){\line(1,0){3}}
\put(0,1){\line(1,0){3}}
\put(1,0){\line(1,0){2}}
\put(3,0){\line(0,1){2}}
\put(2,0){\line(0,1){2}}
\put(1,0){\line(0,1){1}}
\put(0,1){\line(0,1){1}}

\put(3,0){\line(-1,1){1}}
\put(0,2){\line(1,-1){1}}
\put(0,2){\line(2,-1){2}}
\put(1,2){\line(1,-1){1}}
\put(1,1){\line(1,-1){1}}
\put(2,2){\line(1,-1){1}}

}

\put(7,0){

\put(-2.5,1){$A_7$=}

\put(0,2){\p} \put(1,2){\p} \put(2,2){\p} \put(3,2){\p}
\put(0,1){\p} \put(1,1){\p} \put(2,1){\p} \put(3,1){\p}
\put(0,0){\p} \put(1,0){\p} \put(2,0){\p}

\put(0,0){\line(1,0){2}}
\put(0,1){\line(1,0){3}}
\put(0,2){\line(1,0){3}}
\put(3,1){\line(0,1){1}}
\put(2,0){\line(0,1){1}}
\put(1,0){\line(0,1){2}}
\put(0,0){\line(0,1){2}}

\put(0,1){\line(1,1){1}}
\put(0,1){\line(1,-1){1}}
\put(1,1){\line(1,-1){1}}
\put(1,1){\line(1,1){1}}
\put(1,1){\line(2,1){2}}
\put(2,1){\line(1,1){1}}

}

\put(14,0){

\put(-2.5,1){$A_8$=}

\put(0,2){\p} \put(1,2){\p} \put(2,2){\p} \put(3,2){\p}
\put(0,1){\p} \put(1,1){\p} \put(2,1){\p} \put(3,1){\p}
\put(0,0){\p} \put(1,0){\p} \put(2,0){\p}

\put(0,0){\line(1,0){2}}
\put(0,1){\line(1,0){3}}
\put(0,2){\line(1,0){3}}
\put(3,1){\line(0,1){1}}
\put(2,0){\line(0,1){1}}
\put(1,0){\line(0,1){2}}
\put(0,0){\line(0,1){2}}

\put(0,1){\line(1,1){1}}
\put(0,1){\line(1,-1){1}}
\put(1,1){\line(1,-1){1}}
\put(1,2){\line(1,-1){1}}
\put(1,2){\line(2,-1){2}}
\put(2,2){\line(1,-1){1}}

}

\put(21,0){

\put(-2.5,1){$B_1$=}

\put(0,2){\p} \put(1,2){\p} \put(2,2){\p} 
\put(0,1){\p} \put(1,1){\p} \put(2,1){\p} 
\put(0,0){\p} \put(1,0){\p} \put(2,0){\p} 

\put(0,0){\line(1,0){2}}
\put(0,1){\line(1,0){2}}
\put(0,2){\line(1,0){2}}
\put(0,0){\line(0,1){2}}
\put(1,1){\line(0,1){1}}
\put(2,0){\line(0,1){2}}
\put(0,1){\line(1,1){1}}
\put(1,1){\line(1,1){1}}
\put(0,1){\line(1,-1){1}}
\put(0,1){\line(2,-1){2}}
\put(1,1){\line(1,-1){1}}

}

\put(28,0){

\put(-2.5,1){$B_2$=}

\put(0,2){\p} \put(1,2){\p} \put(2,2){\p} 
\put(0,1){\p} \put(1,1){\p} \put(2,1){\p} 
\put(0,0){\p} \put(1,0){\p} \put(2,0){\p} 

\put(0,0){\line(1,0){2}}
\put(0,1){\line(1,0){2}}
\put(0,2){\line(1,0){2}}
\put(0,0){\line(0,1){2}}
\put(1,1){\line(0,1){1}}
\put(2,0){\line(0,1){2}}
\put(0,2){\line(1,-1){1}}
\put(1,2){\line(1,-1){1}}
\put(0,1){\line(1,-1){1}}
\put(0,1){\line(2,-1){2}}
\put(1,1){\line(1,-1){1}}

}

\end{picture}
\caption{All minimal (non-equivalent up to trivial transformations) non-3-colourable and non-word-representable graphs (except for $T_1$ and $T_2$ in Figure~\ref{non-repr-triang}) for triangulations of rectangular polyominoes with a single horizontal domino tile.} \label{non-repr-minim-triang}
\end{center}
\end{figure}
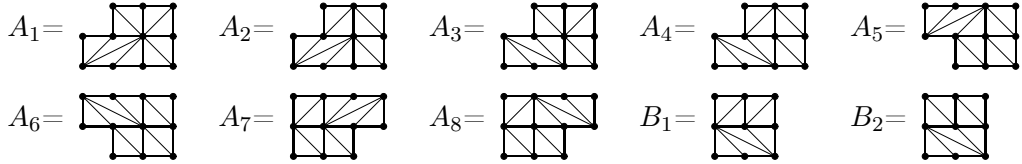

Non-word-representability of graphs in Figure~\ref{non-repr-minim-triang} follows from the following observations. First note that it follows from \cite{KP} that all odd {\em wheel graphs} $W_{2t+1}$ for $t \geq 2$ are non-word-representable, where the wheel graph $W_n$ is the graph obtained by adding an all-adjacent vertex to a cycle graph $C_n$. Using this fact, it is easy to see that all graphs in Figure~\ref{non-repr-minim-triang} are non-word-representable, as they all contain such odd wheel graphs as induced subgraphs:

\begin{itemize}
\item $A_1$ contains $W_9$ (obtained by removing the leftmost vertex on the middle horizontal line);
\item $A_2$, $A_3$, $A_6$ and $A_7$ all contain $W_7$; and
\item $A_4$, $A_5$, $A_8$, $B_1$ and $B_2$ all contain $W_5$.
\end{itemize}

The paper is organized as follows. In  Section~\ref{sec3} we will prove our main result, Theorem~\ref{main-thm}, and in Section~\ref{final-remarks-sec} we will discuss some directions of further research.

It was pointed out to us by the anonymous referee that it should be possible to reduce the number of cases in this paper to consider by using the following easy to see fact. Any triangulation involving the horizontal domino can be 3-coloured if and only if the triangulation obtained by giving the domino its other possible triangulation, leaving all other triangles unchanged, can be 3-coloured. However, we have decided to keep our original approach that has a transparent structure of all the cases that need to be considered.    

%end old version of this section

\section{Triangulations of a rectangular polyomino with a single domino tile}\label{sec3}

Let $S$ be the set of all graphs formed by rotations of the graphs in Figure~\ref{non-repr-triang}  by degrees multiple to $90\,^{\circ}$ and rotations of the graphs in Figure~\ref{non-repr-minim-triang} by degrees multiple to $180\,^{\circ}$ (only horizontal domino tiles are of interest to us).

\begin{lemma}\label{lemma-grid} A triangulation $T$ of a rectangular polyomino with a single domino tile is $3$-colourable if and only if it does not contain a graph from $S$ as an induced subgraph.  \end{lemma}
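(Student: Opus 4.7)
\medskip
\noindent\textbf{Proof plan for Lemma~\ref{lemma-grid}.}
The ``only if'' direction is immediate. As noted after Figure~\ref{non-repr-minim-triang}, every graph in $S$ (including $T_1$ and $T_2$) contains an odd wheel $W_{2t+1}$ with $t\ge 2$ as an induced subgraph, and odd wheels are not $3$-colourable (the hub requires its own colour and an odd cycle cannot be properly $2$-coloured). Since $3$-colourability is hereditary under induced subgraphs, any triangulation containing a member of $S$ fails to be $3$-colourable.

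For the ``if'' direction I argue contrapositively: assuming $T$ is not $3$-colourable, my goal is to exhibit an induced copy of some $G\in S$ inside $T$. My plan follows the outline of the proof of Theorem~\ref{main-thm-Akrobotu} in~\cite{Akrobotu}. One attempts to build a proper $3$-colouring of $T$ by fixing the colours on one triangle and then propagating along shared edges: whenever a new triangle shares an edge with a previously coloured one, its third vertex is forced. A conflict occurs precisely when two distinct forcing chains assign different colours to the same vertex; the sub-triangulation carrying such a conflict is a \emph{minimal obstruction}, and the task is to show that every minimal obstruction is, up to trivial transformation, a graph from $S$.

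I would first dispose of obstructions that do not touch the horizontal domino: the involved sub-triangulation then lies inside a rectangular, domino-free sub-polyomino of $T$, and the argument of~\cite{Akrobotu} applies verbatim to produce $T_1$ or $T_2$. I may therefore assume the minimal obstruction uses at least one of the four domino vertices. The propagation rule then confines the obstruction to a local window consisting of the domino together with the at most six $1\times 1$ cells that share an edge or corner with it. Inside this window I would enumerate all triangulation patterns, the domino itself admitting $2$ diagonals and each adjacent cell admitting $2$ diagonals. The referee's observation recorded in Section~\ref{intro} (flipping the diagonal inside the domino preserves $3$-colourability), together with the $180^{\circ}$ rotation and the reflections that fix a horizontal domino, would be used to cut the enumeration down. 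For each surviving configuration I would either exhibit an explicit $3$-colouring and discard it, or trace the forced colouring to a contradiction and match the induced subgraph on the offending vertices against the list $A_1,\ldots,A_8,B_1,B_2$ in Figure~\ref{non-repr-minim-triang}.

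The main obstacle is precisely this case analysis. There are roughly $2\cdot 2^{6}$ local diagonal patterns before symmetry, and one must verify both that every non-$3$-colourable pattern contains one of the twelve graphs in $S$ and that no new minimal obstruction has been overlooked. To keep the enumeration tractable I would first fix the diagonal of the domino, then the diagonals of the two cells directly above and below it, and only branch on the side cells or on cells a further column away when colour propagation has not yet produced a contradiction. Distinguishing carefully between vertices that genuinely participate in the minimal obstruction and those that are merely adjacent to it is what ultimately lets one confirm that every offending induced subgraph is, up to trivial transformation, a member of $S$.
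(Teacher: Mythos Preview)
Your proposal is a plan rather than a proof, and it contains a structural gap that the paper's argument avoids. The paper does not propagate colours from an arbitrary starting triangle and then search for a ``minimal obstruction''; instead it fixes a specific scanning order (top two rows first, then row by row, left to right), so that the \emph{first} vertex $v$ requiring a fourth colour has all of its already-coloured neighbours lying above or to the left of it. This ordering is what reduces the problem to the eight concrete local pictures $S_1,\ldots,S_8$ of Figure~\ref{conflict-situations}, each of which is then dispatched by a finite enumeration. Your propagation scheme specifies no order, so there is no well-defined ``first conflict'' and no control over which neighbours of the offending vertex are coloured; the notion of minimal obstruction you invoke is not defined precisely enough to support the subsequent locality claim.

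That locality claim is the second gap. You assert that if the obstruction touches the domino then it is confined to the domino together with the (at most six) adjacent unit cells. The paper's own analysis shows this is too optimistic: in situations $S_5$ and $S_8$ one must extend the picture by an additional column, and in $S_8$ sometimes by two, before the configuration can be matched against the list $S$ or ruled out. The point is not that the members of $S$ are large --- they are not --- but that deciding \emph{which} member appears requires looking at cells beyond the immediate neighbourhood in order to rule out alternative $3$-colourings (e.g.\ swapping colours along a row). Without the row-by-row order you have no mechanism for this, and the enumeration you sketch (branching on the diagonals of $\le 6$ cells) would miss exactly these cases. To repair the argument you would need either to adopt the paper's scanning order or to give an independent proof that any minimal non-$3$-colourable induced subgraph touching the domino has bounded size, which is essentially the content of the lemma itself.
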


\begin{proof} If $T$ contains a graph from $S$ as an induced subgraph, then it is obviously not $3$-colourable. 

For the opposite directions, suppose that $T$ is not $3$-colourable. We note that fixing colours of the left-most top vertex in $T$ and the vertex right below it determines uniquely colours in the top two rows of $T$ (a row is a horizontal path) if we are to use colours in $\{1,2,3\}$ and keep all other vertices of $T$ uncoloured.  We continue to colour all other vertices of $T$, row by row, from left to right using any of the available colours in $\{1,2,3\}$. At some point, colour 4 must be used ($T$ is not $3$-colourable) to colour, say, vertex $v$. 

\begin{figure}[h]
\begin{center}
\begin{picture}(8,22)

% First possible conflict situation

\put(-12,14){

\put(-2.5,2.5){$S_1$=}

\put(2.5,1.5){{\tiny 1}} 
\put(2.5,4.1){{\tiny 2}} 
\put(5.2,1.5){{\tiny 4}} 
\put(5.2,3.5){{\tiny 3}} 

\put(3,2){\p} \put(5,2){\p} \put(3,4){\p} \put(5,4){\p} 

\put(0,0){\line(1,0){9}}
\put(0,6){\line(1,0){9}}
\put(0,2){\line(1,0){5}}
\put(3,4){\line(1,0){6}}
\put(3,2){\line(0,1){2}}
\put(5,2){\line(0,1){2}}
\put(3,4){\line(1,-1){2.1}}
\put(0,0){\line(0,1){6}}
\put(9,0){\line(0,1){6}}

\multiput(0,2)(0.5,0.5){8}{\line(1,1){0.2}}
\multiput(1,2)(0.5,0.5){8}{\line(1,1){0.2}}
\multiput(2,2)(0.5,0.5){2}{\line(1,1){0.2}}
\multiput(0,3)(0.5,0.5){6}{\line(1,1){0.2}}
\multiput(0,4)(0.5,0.5){4}{\line(1,1){0.2}}
\multiput(0,5)(0.5,0.5){2}{\line(1,1){0.2}}
\multiput(4,4)(0.5,0.5){4}{\line(1,1){0.2}}
\multiput(5,4)(0.5,0.5){4}{\line(1,1){0.2}}
\multiput(6,4)(0.5,0.5){4}{\line(1,1){0.2}}
\multiput(7,4)(0.5,0.5){4}{\line(1,1){0.2}}
\multiput(8,4)(0.5,0.5){2}{\line(1,1){0.2}}

}

% Second possible conflict situation

\put(1,14){

\put(-2.5,2.5){$S_2$=}

\put(2.5,1.5){{\tiny 1}} 
\put(2.5,4.1){{\tiny 2}} 
\put(5.2,1.5){{\tiny 4}} 
\put(5.2,3.5){{\tiny 1}} 
\put(7.2,3.5){{\tiny 3}} 
\put(7.2,1.5){{\tiny ?}} 

\put(3,2){\p} \put(5,2){\p} \put(3,4){\p} \put(5,4){\p} \put(7,4){\p} \put(7,2){\p} 

\put(5,2){\line(1,1){2.1}}
\put(0,0){\line(1,0){9}}
\put(0,6){\line(1,0){9}}
\put(0,2){\line(1,0){7}}
\put(3,4){\line(1,0){6}}
\put(3,2){\line(0,1){2}}
\put(5,2){\line(0,1){2}}
\put(3,4){\line(1,-1){2.1}}
\put(0,0){\line(0,1){6}}
\put(9,0){\line(0,1){6}}
\put(7,2){\line(0,1){2}}

\multiput(0,2)(0.5,0.5){8}{\line(1,1){0.2}}
\multiput(1,2)(0.5,0.5){8}{\line(1,1){0.2}}
\multiput(2,2)(0.5,0.5){2}{\line(1,1){0.2}}
\multiput(0,3)(0.5,0.5){6}{\line(1,1){0.2}}
\multiput(0,4)(0.5,0.5){4}{\line(1,1){0.2}}
\multiput(0,5)(0.5,0.5){2}{\line(1,1){0.2}}
\multiput(4,4)(0.5,0.5){4}{\line(1,1){0.2}}
\multiput(5,4)(0.5,0.5){4}{\line(1,1){0.2}}
\multiput(6,4)(0.5,0.5){4}{\line(1,1){0.2}}
\multiput(7,4)(0.5,0.5){4}{\line(1,1){0.2}}
\multiput(8,4)(0.5,0.5){2}{\line(1,1){0.2}}

}

% Third possible conflict situation

\put(14,14){

\put(-2.5,2.5){$S_3$=}

\put(2.5,1.5){{\tiny 1}} 
\put(2.5,4.1){{\tiny 3}} 
\put(5.2,1.5){{\tiny 4}} 
\put(5.2,3.5){{\tiny 2}} 
\put(7.2,3.5){{\tiny 3}} 
\put(7.2,1.5){{\tiny ?}} 

\put(3,2){\p} \put(5,2){\p} \put(3,4){\p} \put(5,4){\p} \put(7,4){\p} \put(7,2){\p} 

\put(5,2){\line(1,1){2.1}}
\put(0,0){\line(1,0){9}}
\put(0,6){\line(1,0){9}}
\put(0,2){\line(1,0){7}}
\put(3,4){\line(1,0){6}}
\put(3,2){\line(0,1){2}}
\put(5,2){\line(0,1){2}}
\put(3,2){\line(1,1){2.1}}
\put(0,0){\line(0,1){6}}
\put(9,0){\line(0,1){6}}
\put(7,2){\line(0,1){2}}

\multiput(0,2)(0.5,0.5){8}{\line(1,1){0.2}}
\multiput(1,2)(0.5,0.5){8}{\line(1,1){0.2}}
\multiput(2,2)(0.5,0.5){2}{\line(1,1){0.2}}
\multiput(0,3)(0.5,0.5){6}{\line(1,1){0.2}}
\multiput(0,4)(0.5,0.5){4}{\line(1,1){0.2}}
\multiput(0,5)(0.5,0.5){2}{\line(1,1){0.2}}
\multiput(4,4)(0.5,0.5){4}{\line(1,1){0.2}}
\multiput(5,4)(0.5,0.5){4}{\line(1,1){0.2}}
\multiput(6,4)(0.5,0.5){4}{\line(1,1){0.2}}
\multiput(7,4)(0.5,0.5){4}{\line(1,1){0.2}}
\multiput(8,4)(0.5,0.5){2}{\line(1,1){0.2}}

}

% Forth possible conflict situation

\put(-12,7){

\put(-2.5,2.5){$S_4$=}

\put(2.5,1.5){{\tiny 2}} 
\put(2.5,4.1){{\tiny 3}} 
\put(5.2,1.5){{\tiny 1}} 
\put(4.8,3.4){{\tiny 1}} 
\put(7.2,3.5){{\tiny 2}} 
\put(7.2,1.5){{\tiny 4}} 

\put(3,2){\p} \put(5,2){\p} \put(3,4){\p} \put(5,4){\p} \put(7,4){\p} \put(7,2){\p} 

\put(0,0){\line(1,0){9}}
\put(0,6){\line(1,0){9}}
\put(0,2){\line(1,0){7}}
\put(3,4){\line(1,0){6}}
\put(3,2){\line(0,1){2}}
\put(0,0){\line(0,1){6}}
\put(9,0){\line(0,1){6}}
\put(7,2){\line(0,1){2}}
\put(3,4){\line(1,-1){2}}
\put(3,4){\line(2,-1){4}}
\put(5,4){\line(1,-1){2}}

\multiput(0,2)(0.5,0.5){8}{\line(1,1){0.2}}
\multiput(1,2)(0.5,0.5){8}{\line(1,1){0.2}}
\multiput(2,2)(0.5,0.5){2}{\line(1,1){0.2}}
\multiput(0,3)(0.5,0.5){6}{\line(1,1){0.2}}
\multiput(0,4)(0.5,0.5){4}{\line(1,1){0.2}}
\multiput(0,5)(0.5,0.5){2}{\line(1,1){0.2}}
\multiput(4,4)(0.5,0.5){4}{\line(1,1){0.2}}
\multiput(5,4)(0.5,0.5){4}{\line(1,1){0.2}}
\multiput(6,4)(0.5,0.5){4}{\line(1,1){0.2}}
\multiput(7,4)(0.5,0.5){4}{\line(1,1){0.2}}
\multiput(8,4)(0.5,0.5){2}{\line(1,1){0.2}}

}

\put(1,7){

\put(-2.5,2.5){$S_5$=}

\put(2.5,1.5){{\tiny 2}} 
\put(2.5,4.1){{\tiny 1}} 
\put(5.2,1.5){{\tiny 3}} 
\put(4.8,3.4){{\tiny 2}} 
\put(7.2,3.5){{\tiny 1}} 
\put(7.2,1.5){{\tiny 4}} 

\put(3,2){\p} \put(5,2){\p} \put(3,4){\p} \put(5,4){\p} \put(7,4){\p} \put(7,2){\p} 

\put(0,0){\line(1,0){9}}
\put(0,6){\line(1,0){9}}
\put(0,2){\line(1,0){7}}
\put(3,4){\line(1,0){6}}
\put(3,2){\line(0,1){2}}
\put(0,0){\line(0,1){6}}
\put(9,0){\line(0,1){6}}
\put(7,2){\line(0,1){2}}
\put(3,4){\line(1,-1){2}}
\put(3,4){\line(2,-1){4}}
\put(5,4){\line(1,-1){2}}

\multiput(0,2)(0.5,0.5){8}{\line(1,1){0.2}}
\multiput(1,2)(0.5,0.5){8}{\line(1,1){0.2}}
\multiput(2,2)(0.5,0.5){2}{\line(1,1){0.2}}
\multiput(0,3)(0.5,0.5){6}{\line(1,1){0.2}}
\multiput(0,4)(0.5,0.5){4}{\line(1,1){0.2}}
\multiput(0,5)(0.5,0.5){2}{\line(1,1){0.2}}
\multiput(4,4)(0.5,0.5){4}{\line(1,1){0.2}}
\multiput(5,4)(0.5,0.5){4}{\line(1,1){0.2}}
\multiput(6,4)(0.5,0.5){4}{\line(1,1){0.2}}
\multiput(7,4)(0.5,0.5){4}{\line(1,1){0.2}}
\multiput(8,4)(0.5,0.5){2}{\line(1,1){0.2}}

}

\put(14,7){

\put(-2.5,2.5){$S_6$=}

\put(2.5,1.5){{\tiny 2}} 
\put(2.5,4.1){{\tiny 3}} 
\put(5.2,1.5){{\tiny 1}} 
\put(4.8,3.4){{\tiny 2}} 
\put(7.2,3.5){{\tiny 1}} 
\put(7.2,1.5){{\tiny 4}} 

\put(3,2){\p} \put(5,2){\p} \put(3,4){\p} \put(5,4){\p} \put(7,4){\p} \put(7,2){\p} 

\put(0,0){\line(1,0){9}}
\put(0,6){\line(1,0){9}}
\put(0,2){\line(1,0){7}}
\put(3,4){\line(1,0){6}}
\put(3,2){\line(0,1){2}}
\put(0,0){\line(0,1){6}}
\put(9,0){\line(0,1){6}}
\put(7,2){\line(0,1){2}}
\put(3,4){\line(1,-1){2}}
\put(3,4){\line(2,-1){4}}
\put(5,4){\line(1,-1){2}}

\multiput(0,2)(0.5,0.5){8}{\line(1,1){0.2}}
\multiput(1,2)(0.5,0.5){8}{\line(1,1){0.2}}
\multiput(2,2)(0.5,0.5){2}{\line(1,1){0.2}}
\multiput(0,3)(0.5,0.5){6}{\line(1,1){0.2}}
\multiput(0,4)(0.5,0.5){4}{\line(1,1){0.2}}
\multiput(0,5)(0.5,0.5){2}{\line(1,1){0.2}}
\multiput(4,4)(0.5,0.5){4}{\line(1,1){0.2}}
\multiput(5,4)(0.5,0.5){4}{\line(1,1){0.2}}
\multiput(6,4)(0.5,0.5){4}{\line(1,1){0.2}}
\multiput(7,4)(0.5,0.5){4}{\line(1,1){0.2}}
\multiput(8,4)(0.5,0.5){2}{\line(1,1){0.2}}

}

\put(-12,0){

\put(-2.5,2.5){$S_7$=}

\put(2.5,1.5){{\tiny 3}} 
\put(2.5,4.1){{\tiny 2}} 
\put(5.2,1.5){{\tiny 1}} 
\put(4.8,3.4){{\tiny 1}} 
\put(7.2,3.5){{\tiny 2}} 
\put(7.2,1.5){{\tiny 4}} 
\put(9.2,3.5){{\tiny 3}} 
\put(9.2,1.5){{\tiny ?}} 

\put(3,2){\p} \put(5,2){\p} \put(3,4){\p} \put(5,4){\p} \put(7,4){\p} \put(7,2){\p}  \put(9,4){\p} \put(9,2){\p}

\put(0,0){\line(1,0){11}}
\put(0,6){\line(1,0){11}}
\put(0,2){\line(1,0){9}}
\put(3,4){\line(1,0){8}}
\put(3,2){\line(0,1){2}}
\put(0,0){\line(0,1){6}}
\put(11,0){\line(0,1){6}}
\put(7,2){\line(0,1){2}}
\put(9,2){\line(0,1){2}}
\put(7,2){\line(1,1){2}}
\put(3,2){\line(1,1){2}}
\put(5,2){\line(1,1){2}}
\put(3,2){\line(2,1){4}}

\multiput(0,2)(0.5,0.5){8}{\line(1,1){0.2}}
\multiput(1,2)(0.5,0.5){8}{\line(1,1){0.2}}
\multiput(2,2)(0.5,0.5){2}{\line(1,1){0.2}}
\multiput(0,3)(0.5,0.5){6}{\line(1,1){0.2}}
\multiput(0,4)(0.5,0.5){4}{\line(1,1){0.2}}
\multiput(0,5)(0.5,0.5){2}{\line(1,1){0.2}}
\multiput(4,4)(0.5,0.5){4}{\line(1,1){0.2}}
\multiput(5,4)(0.5,0.5){4}{\line(1,1){0.2}}
\multiput(6,4)(0.5,0.5){4}{\line(1,1){0.2}}
\multiput(7,4)(0.5,0.5){4}{\line(1,1){0.2}}
\multiput(8,4)(0.5,0.5){4}{\line(1,1){0.2}}
\multiput(9,4)(0.5,0.5){4}{\line(1,1){0.2}}
\multiput(10,4)(0.5,0.5){2}{\line(1,1){0.2}}

}

\put(4,0){

\put(-2.5,2.5){$S_8$=}

\put(2.5,1.5){{\tiny 4}} 
\put(2.5,4.1){{\tiny ?}} 
\put(5.2,1.5){{\tiny ?}} 
\put(4.8,3.4){{\tiny ?}} 
\put(7.2,3.5){{\tiny ?}} 
\put(7.2,1.5){{\tiny ?}} 

\put(3,2){\p} \put(5,2){\p} \put(3,4){\p} \put(5,4){\p} \put(7,4){\p} \put(7,2){\p} 

\put(0,0){\line(1,0){9}}
\put(0,6){\line(1,0){9}}
\put(0,2){\line(1,0){7}}
\put(3,4){\line(1,0){6}}
\put(3,2){\line(0,1){2}}
\put(0,0){\line(0,1){6}}
\put(9,0){\line(0,1){6}}
\put(7,2){\line(0,1){2}}

\put(3,2){\line(1,1){2}}
\put(5,2){\line(1,1){2}}
\put(3,2){\line(2,1){4}}

\multiput(0,2)(0.5,0.5){8}{\line(1,1){0.2}}
\multiput(1,2)(0.5,0.5){8}{\line(1,1){0.2}}
\multiput(2,2)(0.5,0.5){2}{\line(1,1){0.2}}
\multiput(0,3)(0.5,0.5){6}{\line(1,1){0.2}}
\multiput(0,4)(0.5,0.5){4}{\line(1,1){0.2}}
\multiput(0,5)(0.5,0.5){2}{\line(1,1){0.2}}
\multiput(4,4)(0.5,0.5){4}{\line(1,1){0.2}}
\multiput(5,4)(0.5,0.5){4}{\line(1,1){0.2}}
\multiput(6,4)(0.5,0.5){4}{\line(1,1){0.2}}
\multiput(7,4)(0.5,0.5){4}{\line(1,1){0.2}}
\multiput(8,4)(0.5,0.5){2}{\line(1,1){0.2}}

}

\end{picture}
\caption{Eight possible cases of appearance of colour 4 in colouring of $T$.} \label{conflict-situations}
\end{center}
\end{figure}
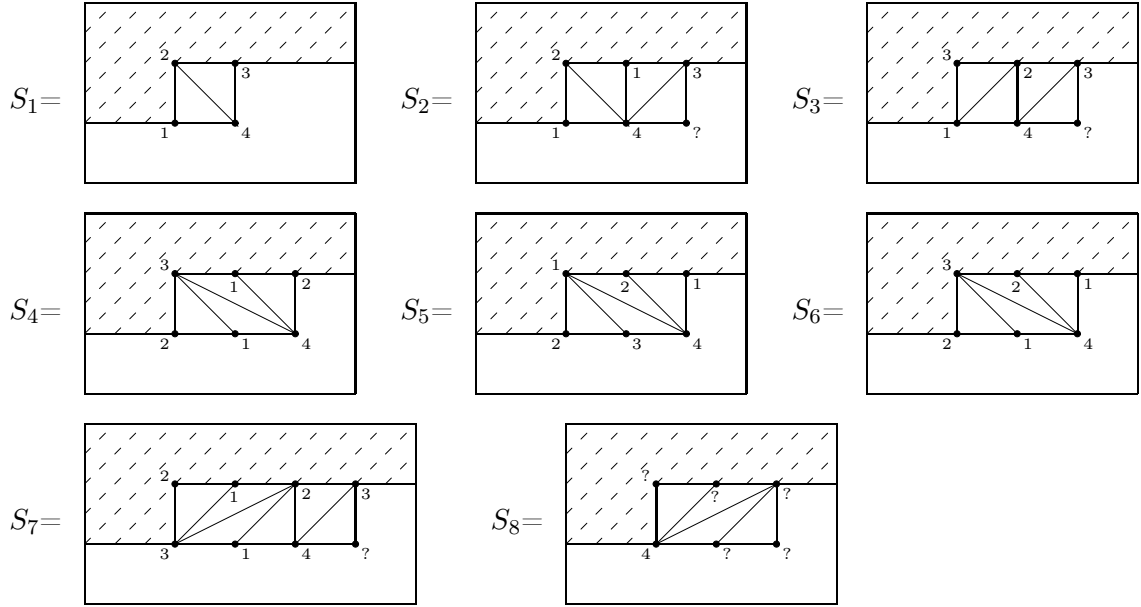

There are only eight possible different situations when this can happen, which are presented in Figure~\ref{conflict-situations} (numbers in this figure are colours). In that figure, the shaded area indicates schematically already coloured vertices of $T$, the question mark shows a still non-coloured vertex, and the colours adjacent to $v$ are fixed in a particular way without loss of generality (we can re-name already used colours if needed). A particular property in all cases is that among the colours of neighbours of $v$, we meet all the colours in $\{1,2,3\}$. Also,  by our procedure, $v$ must be in row $i$ from above, where $i\geq 3$, since the first two rows of any triangulation in question can be coloured in three colours. Note that the situations $S_1$--$S_3$ exhaust all the possibilities when the domino tile is not connected to $v$ (these cases are the same as in \cite{Akrobotu}) possibly except for $S_1$, while $S_4$--$S_8$ cover the remaining cases. 

More precisely, the situations $S_4$--$S_6$ describe all the possibilities when $v$ is the right-bottom vertex of the domino tile with the specified triangulation of it. Here $v$ is connected to four vertices, so that exactly two of them must be coloured in the same colour, giving the three cases. The situation $S_7$ is clearly the only one representing the other triangulation of the domino tile with $v$ being the right-bottom vertex. Further, $v$ cannot be the middle-bottom vertex in the domino tile, as such a vertex is connected to only two already coloured vertices, so that usage of colour 4 was not required. Finally, the Situation $S_8$ represents all cases of $v$ being the left-bottom vertex in the domino (there is no need for us to specify colours to obtain the desired result). Note that the other triangulation of the domino tile placed as in $S_8$ does not bring any new cases (because this is considered in the situation $S_1$), and thus, it is omitted.\\

\noindent 
{\bf Situation $S_1$.} We can assume that the vertices coloured 1 and 2 are not in the leftmost column, because otherwise instead of colour 1 we could use colour 3, and there would be no need to use colour 4 for colouring $v$. Further, note that in the case when  the  vertex $v$ is involved in a subgraph presented schematically to the left in Figure~\ref{S1-impossible} (the question marks there indicate that triangulations of respective squares are unknown to us), such a subgraph must be either $T_1$ or $T_2$. Indeed, otherwise, the subgraph must be one of the four graphs presented in Figure~\ref{S1-impossible}  to the right of the leftmost graph. However, in each of the four cases, we have a vertex labeled by * that would require colour 4 contradicting the fact that $v$ is supposed to be the only vertex coloured by 4. This completes our considerations of eight of subcases in the situation $S_1$ out of 36. The remaining subcases are to be considered next. 

\begin{figure}[h]
\begin{center}
\begin{picture}(35,4.5)

\put(0,0){

\put(0,4){\p} \put(2,4){\p} \put(4,4){\p} 
\put(0,2){\p} \put(2,2){\p} \put(4,2){\p} 
\put(0,0){\p} \put(2,0){\p} \put(4,0){\p}

 \put(1.4,2.3){2} \put(4.1,2.2){3} 
     \put(1.4,0.2){1} \put(4.2,0.2){4} 

\put(0,0){\line(1,0){4}}
\put(0,2){\line(1,0){4}}
\put(0,4){\line(1,0){4}}
\put(4,0){\line(0,1){4}}
\put(2,0){\line(0,1){4}}
\put(0,0){\line(0,1){4}}

\put(2,2){\line(1,-1){2}}

\put(0.5,3){{\large ?}}
\put(0.5,1){{\large ?}}
\put(3,3){{\large ?}}

}

\put(7,0){

\put(0,4){\p} \put(2,4){\p} \put(4,4){\p} 
\put(0,2){\p} \put(2,2){\p} \put(4,2){\p} 
\put(0,0){\p} \put(2,0){\p} \put(4,0){\p}

\put(-0.6,4.1){*}  \put(1.9,4.2){3} \put(4.1,4.2){1} 
\put(-0.6,2.2){1} \put(1.5,2.3){2} \put(4.1,2.2){3} 
 \put(-0.6,0.2){3}    \put(1.4,0.2){1} \put(4.2,0.2){4} 

\put(0,0){\line(1,0){4}}
\put(0,2){\line(1,0){4}}
\put(0,4){\line(1,0){4}}
\put(4,0){\line(0,1){4}}
\put(2,0){\line(0,1){4}}
\put(0,0){\line(0,1){4}}

\put(0,0){\line(1,1){4}}
\put(0,4){\line(1,-1){4}}

}

\put(14,0){

\put(0,4){\p} \put(2,4){\p} \put(4,4){\p} 
\put(0,2){\p} \put(2,2){\p} \put(4,2){\p} 
\put(0,0){\p} \put(2,0){\p} \put(4,0){\p}

\put(-0.6,4.2){?}  \put(1.9,4.2){1} \put(4.1,4.2){2} 
\put(-0.6,2.2){*} \put(1.5,2.3){2} \put(4.1,2.2){3} 
 \put(-0.6,0.2){3}    \put(1.4,0.2){1} \put(4.2,0.2){4} 

\put(0,0){\line(1,0){4}}
\put(0,2){\line(1,0){4}}
\put(0,4){\line(1,0){4}}
\put(4,0){\line(0,1){4}}
\put(2,0){\line(0,1){4}}
\put(0,0){\line(0,1){4}}

\put(0,0){\line(1,1){2}}
\put(0,2){\line(1,1){2}}
\put(2,2){\line(1,-1){2}}
\put(2,4){\line(1,-1){2}}

}

\put(21,0){

\put(0,4){\p} \put(2,4){\p} \put(4,4){\p} 
\put(0,2){\p} \put(2,2){\p} \put(4,2){\p} 
\put(0,0){\p} \put(2,0){\p} \put(4,0){\p}

\put(-0.6,4.2){2}  \put(1.9,4.2){1} \put(4.1,4.1){*} 
\put(-0.6,2.2){3} \put(1.5,2.3){2} \put(4.1,2.2){3} 
 \put(-0.6,0.2){2}    \put(1.4,0.2){1} \put(4.2,0.2){4} 

\put(0,0){\line(1,0){4}}
\put(0,2){\line(1,0){4}}
\put(0,4){\line(1,0){4}}
\put(4,0){\line(0,1){4}}
\put(2,0){\line(0,1){4}}
\put(0,0){\line(0,1){4}}

\put(0,2){\line(1,-1){2}}
\put(0,2){\line(1,1){2}}
\put(2,2){\line(1,-1){2}}
\put(2,2){\line(1,1){2}}

}

\put(28,0){

\put(0,4){\p} \put(2,4){\p} \put(4,4){\p} 
\put(0,2){\p} \put(2,2){\p} \put(4,2){\p} 
\put(0,0){\p} \put(2,0){\p} \put(4,0){\p}

\put(-0.6,4.2){1}  \put(1.9,4.1){*} \put(4.1,4.2){?} 
\put(-0.6,2.2){3} \put(1.5,2.3){2} \put(4.1,2.2){3} 
 \put(-0.6,0.2){2}    \put(1.4,0.2){1} \put(4.2,0.2){4} 

\put(0,0){\line(1,0){4}}
\put(0,2){\line(1,0){4}}
\put(0,4){\line(1,0){4}}
\put(4,0){\line(0,1){4}}
\put(2,0){\line(0,1){4}}
\put(0,0){\line(0,1){4}}

\put(0,4){\line(1,-1){4}}
\put(0,2){\line(1,-1){2}}
\put(2,4){\line(1,-1){2}}

}

\end{picture}
\caption{Impossible subcases in the situation $S_1$ in Figure~\ref{conflict-situations}.} \label{S1-impossible}
\end{center}
\end{figure}

It follows that the domino tile must share a vertex with the square coloured by 1, 2, 3 and 4 in Figure~\ref{conflict-situations} and there are 28 possible subcases to consider: 

\begin{itemize} 
\item Eight subcases, $A_1$---$A_8$, presented in Figure~\ref{non-repr-minim-triang}, where the colours of vertices are omitted (in each of these graphs, the vertex $v$ is the rightmost vertex on the bottom row);

\item 12 subcases presented in Figures~\ref{S1-impossible-01}-\ref{S1-subcases-03}. These subcases are impossible because in each of them there exists a vertex, labeled by *, that requires colour 4.

\item Four subcases presented in Figure~\ref{S1-subcases}, where colouring of vertices is shown; and, finally, 

\item Four subcases presented in Figure~\ref{S1-1-subcases}, where colouring of vertices of interest is shown. 
\end{itemize}

However, the graphs in Figure~\ref{S1-subcases}, from left to right, are, respectively:
\begin{itemize} 
\item $A_3$ flipped with respect to a horizontal line;
\item $A_1$ flipped with respect to a horizontal line;
\item $A_1$ rotated 180 degrees;
\item $A_3$ rotated 180 degrees.
\end{itemize}

\begin{figure}[h]
\begin{center}
\begin{picture}(36,4.5)

\put(0,0){

                    \put(2,4){\p} \put(4,4){\p} \put(6,4){\p}
\put(0,2){\p} \put(2,2){\p} \put(4,2){\p} \put(6,2){\p}
\put(0,0){\p} \put(2,0){\p} \put(4,0){\p} \put(6,0){\p}

                \put(1.5,4.2){2} \put(3.9,4.1){1} \put(6.2,4.1){*}
\put(-0.6,2.2){2} \put(1.4,2.3){3} \put(4.1,2.2){2} \put(6.2,2){3}
 \put(-0.6,0.2){1}    \put(1.4,0.2){3} \put(3.4,0.2){1} \put(6.2,0.2){4} 

\put(0,0){\line(1,0){6}}
\put(0,2){\line(1,0){6}}
\put(2,4){\line(1,0){4}}
\put(6,0){\line(0,1){4}}
\put(4,0){\line(0,1){4}}
\put(2,2){\line(0,1){2}}
\put(0,0){\line(0,1){2}}

\put(0,0){\line(1,1){2}}
\put(0,0){\line(2,1){4}}
\put(2,0){\line(1,1){2}}
\put(6,0){\line(-1,1){2}}

\put(2,2){\line(1,1){2}}
\put(4,2){\line(1,1){2}}

}

\put(9,0){

                    \put(2,4){\p} \put(4,4){\p} \put(6,4){\p}
\put(0,2){\p} \put(2,2){\p} \put(4,2){\p} \put(6,2){\p}
\put(0,0){\p} \put(2,0){\p} \put(4,0){\p} \put(6,0){\p}

                \put(1.5,4.2){1} \put(3.9,4.1){*} \put(6.2,4.1){?}
\put(-0.6,2.2){2} \put(1.4,2.3){3} \put(4.1,2.2){2} \put(6.2,2){3}
 \put(-0.6,0.2){1}    \put(1.4,0.2){3} \put(3.4,0.2){1} \put(6.2,0.2){4} 

\put(0,0){\line(1,0){6}}
\put(0,2){\line(1,0){6}}
\put(2,4){\line(1,0){4}}
\put(6,0){\line(0,1){4}}
\put(4,0){\line(0,1){4}}
\put(2,2){\line(0,1){2}}
\put(0,0){\line(0,1){2}}

\put(0,0){\line(1,1){2}}
\put(0,0){\line(2,1){4}}
\put(2,0){\line(1,1){2}}
\put(6,0){\line(-1,1){2}}
\put(2,4){\line(1,-1){2}}
\put(4,4){\line(1,-1){2}}

}

\put(18,0){

                    \put(2,4){\p} \put(4,4){\p} \put(6,4){\p}
\put(0,2){\p} \put(2,2){\p} \put(4,2){\p} \put(6,2){\p}
\put(0,0){\p} \put(2,0){\p} \put(4,0){\p} \put(6,0){\p}

                \put(1.5,4.1){*} \put(3.9,4.2){1} \put(6.2,4.1){2}
\put(-0.6,2.2){2} \put(1.4,2.3){3} \put(4.1,2.2){2} \put(6.2,2){3}
 \put(-0.6,0.2){1}    \put(1.4,0.2){3} \put(3.4,0.2){1} \put(6.2,0.2){4} 

\put(0,0){\line(1,0){6}}
\put(0,2){\line(1,0){6}}
\put(2,4){\line(1,0){4}}
\put(6,0){\line(0,1){4}}
\put(4,0){\line(0,1){4}}
\put(2,2){\line(0,1){2}}
\put(0,0){\line(0,1){2}}

\put(2,0){\line(-1,1){2}}
\put(4,0){\line(-2,1){4}}
\put(4,0){\line(-1,1){2}}
\put(6,0){\line(-1,1){2}}
\put(2,4){\line(1,-1){2}}
\put(4,4){\line(1,-1){2}}

}

\put(27,0){

                    \put(2,4){\p} \put(4,4){\p} \put(6,4){\p}
\put(0,2){\p} \put(2,2){\p} \put(4,2){\p} \put(6,2){\p}
\put(0,0){\p} \put(2,0){\p} \put(4,0){\p} \put(6,0){\p}

                \put(1.5,4.2){?} \put(3.9,4.2){3} \put(6.2,4.2){1}
\put(-0.6,2.2){?} \put(1.4,2.1){*} \put(4.1,2.2){2} \put(6.2,2){3}
 \put(-0.6,0.2){?}    \put(1.4,0.2){?} \put(3.4,0.2){1} \put(6.2,0.2){4} 

\put(0,0){\line(1,0){6}}
\put(0,2){\line(1,0){6}}
\put(2,4){\line(1,0){4}}
\put(6,0){\line(0,1){4}}
\put(4,0){\line(0,1){4}}
\put(2,2){\line(0,1){2}}
\put(0,0){\line(0,1){2}}

\put(2,0){\line(-1,1){2}}
\put(4,0){\line(-2,1){4}}
\put(4,0){\line(-1,1){2}}
\put(6,0){\line(-1,1){2}}

\put(2,2){\line(1,1){2}}
\put(4,2){\line(1,1){2}}

}

\end{picture}
\caption{Four impossible subcases in the situation $S_1$ in Figure~\ref{conflict-situations}.} \label{S1-impossible-01}
\end{center}
\end{figure}

\begin{figure}[h]
\begin{center}
\begin{picture}(36,4.5)

\put(0,0){

\put(0,4){\p} \put(2,4){\p} \put(4,4){\p} \put(6,4){\p}
\put(0,2){\p} \put(2,2){\p} \put(4,2){\p} \put(6,2){\p}
                    \put(2,0){\p} \put(4,0){\p} \put(6,0){\p}

\put(-0.6,4.2){1}  \put(1.9,4.2){3} \put(3.9,4.2){1} \put(6.2,4.2){2}
\put(-0.6,2.2){2} \put(1.6,2.3){3} \put(4.2,2.2){2} \put(6.2,2){3}
                          \put(1.4,0.1){*}    \put(3.4,0.2){1} \put(6.2,0.2){4} 

\put(2,0){\line(1,0){4}}
\put(0,2){\line(1,0){6}}
\put(0,4){\line(1,0){6}}
\put(6,0){\line(0,1){4}}
\put(4,0){\line(0,1){4}}
\put(2,0){\line(0,1){2}}
\put(0,2){\line(0,1){2}}

\put(6,2){\line(-1,1){2}}
\put(2,0){\line(1,1){2}}
\put(6,0){\line(-1,1){2}}
\put(0,2){\line(1,1){2}}
\put(0,2){\line(2,1){4}}
\put(2,2){\line(1,1){2}}

}

\put(9,0){

\put(0,4){\p} \put(2,4){\p} \put(4,4){\p} \put(6,4){\p}
\put(0,2){\p} \put(2,2){\p} \put(4,2){\p} \put(6,2){\p}
                    \put(2,0){\p} \put(4,0){\p} \put(6,0){\p}

\put(-0.6,4.2){1}  \put(1.9,4.2){3} \put(3.9,4.2){1} \put(6.2,4.2){2}
\put(-0.6,2.2){2} \put(1.6,2.3){3} \put(4.2,2.2){2} \put(6.2,2){3}
                          \put(1.4,0.1){*}    \put(3.4,0.2){1} \put(6.2,0.2){4} 

\put(2,0){\line(1,0){4}}
\put(0,2){\line(1,0){6}}
\put(0,4){\line(1,0){6}}
\put(6,0){\line(0,1){4}}
\put(4,0){\line(0,1){4}}
\put(2,0){\line(0,1){2}}
\put(0,2){\line(0,1){2}}

\put(6,2){\line(-1,1){2}}
\put(2,0){\line(1,1){2}}
\put(6,0){\line(-1,1){2}}
\put(0,4){\line(1,-1){2}}
\put(0,4){\line(2,-1){4}}
\put(2,4){\line(1,-1){2}}

}

\put(18,0){

\put(0,4){\p} \put(2,4){\p} \put(4,4){\p} \put(6,4){\p}
\put(0,2){\p} \put(2,2){\p} \put(4,2){\p} \put(6,2){\p}
                    \put(2,0){\p} \put(4,0){\p} \put(6,0){\p}

\put(-0.6,4.2){?}  \put(1.9,4.2){?} \put(3.9,4.2){3} \put(6.2,4.2){1}
\put(-0.6,2.2){?} \put(1.6,2.1){*} \put(4.2,2.2){2} \put(6.2,2){3}
                          \put(1.4,0.2){?}    \put(3.4,0.2){1} \put(6.2,0.2){4} 

\put(2,0){\line(1,0){4}}
\put(0,2){\line(1,0){6}}
\put(0,4){\line(1,0){6}}
\put(6,0){\line(0,1){4}}
\put(4,0){\line(0,1){4}}
\put(2,0){\line(0,1){2}}
\put(0,2){\line(0,1){2}}

\put(4,2){\line(1,1){2}}
\put(4,0){\line(-1,1){2}}
\put(6,0){\line(-1,1){2}}
\put(0,2){\line(1,1){2}}
\put(0,2){\line(2,1){4}}
\put(2,2){\line(1,1){2}}

}

\put(27,0){

\put(0,4){\p} \put(2,4){\p} \put(4,4){\p} \put(6,4){\p}
\put(0,2){\p} \put(2,2){\p} \put(4,2){\p} \put(6,2){\p}
                    \put(2,0){\p} \put(4,0){\p} \put(6,0){\p}

\put(-0.6,4.2){*}  \put(1.9,4.2){1} \put(3.9,4.2){3} \put(6.2,4.2){1}
\put(-0.6,2.2){?} \put(1.6,2.3){3} \put(4.2,2.2){2} \put(6.2,2){3}
                          \put(1.4,0.2){2}    \put(3.4,0.2){1} \put(6.2,0.2){4} 

\put(2,0){\line(1,0){4}}
\put(0,2){\line(1,0){6}}
\put(0,4){\line(1,0){6}}
\put(6,0){\line(0,1){4}}
\put(4,0){\line(0,1){4}}
\put(2,0){\line(0,1){2}}
\put(0,2){\line(0,1){2}}

\put(4,2){\line(1,1){2}}
\put(4,0){\line(-1,1){2}}
\put(6,0){\line(-1,1){2}}
\put(0,4){\line(1,-1){2}}
\put(0,4){\line(2,-1){4}}
\put(2,4){\line(1,-1){2}}

}
\end{picture}
\caption{Four impossible subcases in the situation $S_1$ in Figure~\ref{conflict-situations}.} \label{S1-impossible-02}
\end{center}
\end{figure}

\begin{figure}[h]
\begin{center}
\begin{picture}(36,4.5)

\put(0,0){

\put(0,4){\p} \put(2,4){\p} \put(4,4){\p} \put(6,4){\p}
\put(0,2){\p} \put(2,2){\p} \put(4,2){\p} \put(6,2){\p}
\put(0,0){\p} \put(2,0){\p} \put(4,0){\p}

\put(-0.6,4.2){?}  \put(1.9,4.1){*} \put(3.9,4.2){?} \put(6.2,4.2){?}
\put(-0.6,2.2){1} \put(1.6,2.3){2} \put(3.8,2.2){3} \put(6.2,2){?}
 \put(-0.6,0.2){3}    \put(1.4,0.2){1} \put(4.2,0.2){4} 

\put(0,0){\line(1,0){4}}
\put(0,2){\line(1,0){6}}
\put(0,4){\line(1,0){6}}
\put(6,2){\line(0,1){2}}
\put(4,0){\line(0,1){2}}
\put(2,0){\line(0,1){4}}
\put(0,0){\line(0,1){4}}

\put(0,2){\line(1,1){2}}
\put(0,0){\line(1,1){2}}
\put(2,2){\line(1,-1){2}}
\put(2,4){\line(1,-1){2}}
\put(2,4){\line(2,-1){4}}
\put(4,4){\line(1,-1){2}}

}

\put(9,0){

\put(0,4){\p} \put(2,4){\p} \put(4,4){\p} \put(6,4){\p}
\put(0,2){\p} \put(2,2){\p} \put(4,2){\p} \put(6,2){\p}
\put(0,0){\p} \put(2,0){\p} \put(4,0){\p}

\put(-0.6,4.2){2}  \put(1.9,4.2){3} \put(3.9,4.2){1} \put(6.2,4.1){*}
\put(-0.6,2.2){1} \put(1.6,2.3){2} \put(3.8,2.2){3} \put(6.2,2){?}
 \put(-0.6,0.2){3}    \put(1.4,0.2){1} \put(4.2,0.2){4} 

\put(0,0){\line(1,0){4}}
\put(0,2){\line(1,0){6}}
\put(0,4){\line(1,0){6}}
\put(6,2){\line(0,1){2}}
\put(4,0){\line(0,1){2}}
\put(2,0){\line(0,1){4}}
\put(0,0){\line(0,1){4}}

\put(0,2){\line(1,1){2}}
\put(0,0){\line(1,1){2}}
\put(2,2){\line(1,-1){2}}
\put(2,2){\line(1,1){2}}
\put(2,2){\line(2,1){4}}
\put(4,2){\line(1,1){2}}

}

\put(18,0){

\put(0,4){\p} \put(2,4){\p} \put(4,4){\p} \put(6,4){\p}
\put(0,2){\p} \put(2,2){\p} \put(4,2){\p} \put(6,2){\p}
\put(0,0){\p} \put(2,0){\p} \put(4,0){\p}

\put(-0.6,4.2){1}  \put(1.9,4.2){3} \put(3.9,4.2){1} \put(6.2,4.1){*}
\put(-0.6,2.2){3} \put(1.6,2.3){2} \put(3.8,2.2){3} \put(6.2,2){?}
 \put(-0.6,0.2){2}    \put(1.4,0.2){1} \put(4.2,0.2){4} 

\put(0,0){\line(1,0){4}}
\put(0,2){\line(1,0){6}}
\put(0,4){\line(1,0){6}}
\put(6,2){\line(0,1){2}}
\put(4,0){\line(0,1){2}}
\put(2,0){\line(0,1){4}}
\put(0,0){\line(0,1){4}}

\put(0,4){\line(1,-1){2}}
\put(2,0){\line(-1,1){2}}
\put(2,2){\line(1,-1){2}}
\put(2,2){\line(1,1){2}}
\put(2,2){\line(2,1){4}}
\put(4,2){\line(1,1){2}}

}

\put(27,0){

\put(0,4){\p} \put(2,4){\p} \put(4,4){\p} \put(6,4){\p}
\put(0,2){\p} \put(2,2){\p} \put(4,2){\p} \put(6,2){\p}
\put(0,0){\p} \put(2,0){\p} \put(4,0){\p}

\put(-0.6,4.2){1}  \put(1.9,4.1){*} \put(3.9,4.2){?} \put(6.2,4.2){?}
\put(-0.6,2.2){3} \put(1.6,2.3){2} \put(3.8,2.2){3} \put(6.2,2){?}
 \put(-0.6,0.2){2}    \put(1.4,0.2){1} \put(4.2,0.2){4} 

\put(0,0){\line(1,0){4}}
\put(0,2){\line(1,0){6}}
\put(0,4){\line(1,0){6}}
\put(6,2){\line(0,1){2}}
\put(4,0){\line(0,1){2}}
\put(2,0){\line(0,1){4}}
\put(0,0){\line(0,1){4}}

\put(0,4){\line(1,-1){2}}
\put(2,0){\line(-1,1){2}}
\put(2,2){\line(1,-1){2}}
\put(2,4){\line(1,-1){2}}
\put(2,4){\line(2,-1){4}}
\put(4,4){\line(1,-1){2}}

}

\end{picture}
\caption{Four impossible subcases in the situation $S_1$ in Figure~\ref{conflict-situations}.} \label{S1-subcases-03}
\end{center}
\end{figure}

Moreover, the leftmost two graphs in Figure~\ref{S1-1-subcases} are, respectively, $B_2$ rotated 180 degrees and $B_1$ flipped with respect to a horizontal line. Finally, in each of the rightmost two graphs in Figure~\ref{S1-1-subcases}, we have a vertex labeled by * that would require colour 4 contradicting the fact that $v$ is supposed to be the only vertex coloured by 4. \\

\begin{figure}[h]
\begin{center}
\begin{picture}(36,4.5)

\put(0,0){

\put(0,4){\p} \put(2,4){\p} \put(4,4){\p} \put(6,4){\p}
\put(0,2){\p} \put(2,2){\p} \put(4,2){\p} \put(6,2){\p}
                    \put(2,0){\p} \put(4,0){\p} \put(6,0){\p}

\put(-0.6,4.2){3}  \put(1.9,4.2){1} \put(3.9,4.2){3} \put(6.2,4.2){1}
\put(-0.6,2.2){2} \put(1.6,2.3){1} \put(4.2,2.2){2} \put(6.2,2){3}
                          \put(1.4,0.2){3}    \put(3.4,0.2){1} \put(6.2,0.2){4} 

\put(2,0){\line(1,0){4}}
\put(0,2){\line(1,0){6}}
\put(0,4){\line(1,0){6}}
\put(6,0){\line(0,1){4}}
\put(4,0){\line(0,1){4}}
\put(2,0){\line(0,1){2}}
\put(0,2){\line(0,1){2}}

\put(2,0){\line(1,1){4}}
\put(6,0){\line(-1,1){2}}
\put(0,2){\line(1,1){2}}
\put(0,2){\line(2,1){4}}
\put(2,2){\line(1,1){2}}

}

\put(9,0){

\put(0,4){\p} \put(2,4){\p} \put(4,4){\p} \put(6,4){\p}
\put(0,2){\p} \put(2,2){\p} \put(4,2){\p} \put(6,2){\p}
                    \put(2,0){\p} \put(4,0){\p} \put(6,0){\p}

\put(-0.6,4.2){3}  \put(1.9,4.2){1} \put(3.9,4.2){3} \put(6.2,4.2){1}
\put(-0.6,2.2){2} \put(1.6,2.3){1} \put(4.2,2.2){2} \put(6.2,2){3}
                          \put(1.4,0.2){3}    \put(3.4,0.2){1} \put(6.2,0.2){4} 

\put(2,0){\line(1,0){4}}
\put(0,2){\line(1,0){6}}
\put(0,4){\line(1,0){6}}
\put(6,0){\line(0,1){4}}
\put(4,0){\line(0,1){4}}
\put(2,0){\line(0,1){2}}
\put(0,2){\line(0,1){2}}

\put(2,0){\line(1,1){4}}
\put(6,0){\line(-1,1){2}}
\put(0,4){\line(1,-1){2}}
\put(0,4){\line(2,-1){4}}
\put(2,4){\line(1,-1){2}}

}

\put(18,0){

\put(0,4){\p} \put(2,4){\p} \put(4,4){\p} \put(6,4){\p}
\put(0,2){\p} \put(2,2){\p} \put(4,2){\p} \put(6,2){\p}
\put(0,0){\p} \put(2,0){\p} \put(4,0){\p}

\put(-0.6,4.2){3}  \put(1.9,4.2){1} \put(3.9,4.2){3} \put(6.2,4.2){1}
\put(-0.6,2.2){1} \put(1.6,2.3){2} \put(3.8,2.2){3} \put(6.2,2){2}
 \put(-0.6,0.2){3}    \put(1.4,0.2){1} \put(4.2,0.2){4} 

\put(0,0){\line(1,0){4}}
\put(0,2){\line(1,0){6}}
\put(0,4){\line(1,0){6}}
\put(6,2){\line(0,1){2}}
\put(4,0){\line(0,1){2}}
\put(2,0){\line(0,1){4}}
\put(0,0){\line(0,1){4}}

\put(0,4){\line(1,-1){2}}
\put(0,0){\line(1,1){2}}
\put(2,2){\line(1,-1){2}}
\put(2,2){\line(1,1){2}}
\put(2,2){\line(2,1){4}}
\put(4,2){\line(1,1){2}}

}

\put(27,0){

\put(0,4){\p} \put(2,4){\p} \put(4,4){\p} \put(6,4){\p}
\put(0,2){\p} \put(2,2){\p} \put(4,2){\p} \put(6,2){\p}
\put(0,0){\p} \put(2,0){\p} \put(4,0){\p}

\put(-0.6,4.2){3}  \put(1.9,4.2){1} \put(3.9,4.2){3} \put(6.2,4.2){1}
\put(-0.6,2.2){1} \put(1.6,2.3){2} \put(3.8,2.2){3} \put(6.2,2){2}
 \put(-0.6,0.2){3}    \put(1.4,0.2){1} \put(4.2,0.2){4} 

\put(0,0){\line(1,0){4}}
\put(0,2){\line(1,0){6}}
\put(0,4){\line(1,0){6}}
\put(6,2){\line(0,1){2}}
\put(4,0){\line(0,1){2}}
\put(2,0){\line(0,1){4}}
\put(0,0){\line(0,1){4}}

\put(0,4){\line(1,-1){2}}
\put(0,0){\line(1,1){2}}
\put(2,2){\line(1,-1){2}}
\put(2,4){\line(1,-1){2}}
\put(2,4){\line(2,-1){4}}
\put(4,4){\line(1,-1){2}}

}

\end{picture}
\caption{Four subcases in the situation $S_1$ in Figure~\ref{conflict-situations}.} \label{S1-subcases}
\end{center}
\end{figure}
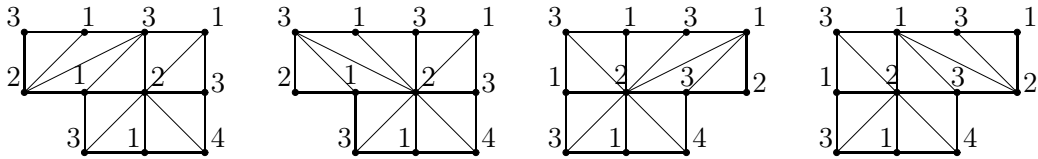

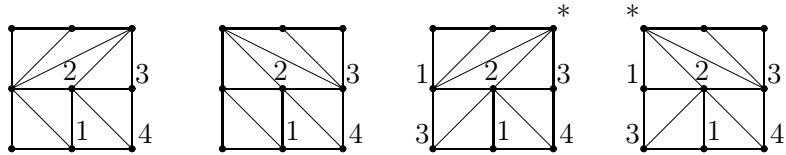
\begin{figure}[h]
\begin{center}
\begin{picture}(27,4)

\put(0,0){

\put(0,4){\p} \put(2,4){\p} \put(4,4){\p} 
\put(0,2){\p} \put(2,2){\p} \put(4,2){\p} 
\put(0,0){\p} \put(2,0){\p} \put(4,0){\p}

\put(1.7,2.3){2} \put(4.1,2.2){3} 
\put(2.1,0.3){1} \put(4.2,0.2){4} 

\put(0,0){\line(1,0){4}}
\put(0,2){\line(1,0){4}}
\put(0,4){\line(1,0){4}}
\put(4,0){\line(0,1){4}}
\put(2,0){\line(0,1){2}}
\put(0,0){\line(0,1){4}}

\put(4,4){\line(-1,-1){2}}
\put(0,2){\line(1,1){2}}
\put(0,2){\line(1,-1){2}}
\put(0,2){\line(2,1){4}}
\put(2,2){\line(1,-1){2}}

}

\put(7,0){

\put(0,4){\p} \put(2,4){\p} \put(4,4){\p} 
\put(0,2){\p} \put(2,2){\p} \put(4,2){\p} 
\put(0,0){\p} \put(2,0){\p} \put(4,0){\p}

\put(1.7,2.3){2} \put(4.1,2.2){3} 
\put(2.1,0.3){1} \put(4.2,0.2){4} 

\put(0,0){\line(1,0){4}}
\put(0,2){\line(1,0){4}}
\put(0,4){\line(1,0){4}}
\put(4,0){\line(0,1){4}}
\put(2,0){\line(0,1){2}}
\put(0,0){\line(0,1){4}}

\put(2,4){\line(1,-1){2}}
\put(2,2){\line(-1,1){2}}
\put(0,2){\line(1,-1){2}}
\put(0,4){\line(2,-1){4}}
\put(2,2){\line(1,-1){2}}

}

\put(14,0){

\put(0,4){\p} \put(2,4){\p} \put(4,4){\p} 
\put(0,2){\p} \put(2,2){\p} \put(4,2){\p} 
\put(0,0){\p} \put(2,0){\p} \put(4,0){\p}

                                                   \put(4.1,4.1){*} 
\put(-0.6,2.2){1}  \put(1.7,2.3){2} \put(4.1,2.2){3} 
 \put(-0.6,0.2){3}  \put(2.1,0.3){1} \put(4.2,0.2){4} 

\put(0,0){\line(1,0){4}}
\put(0,2){\line(1,0){4}}
\put(0,4){\line(1,0){4}}
\put(4,0){\line(0,1){4}}
\put(2,0){\line(0,1){2}}
\put(0,0){\line(0,1){4}}

\put(2,2){\line(1,1){2}}
\put(0,2){\line(1,1){2}}
\put(0,0){\line(1,1){2}}
\put(0,2){\line(2,1){4}}
\put(2,2){\line(1,-1){2}}

}

\put(21,0){

\put(0,4){\p} \put(2,4){\p} \put(4,4){\p} 
\put(0,2){\p} \put(2,2){\p} \put(4,2){\p} 
\put(0,0){\p} \put(2,0){\p} \put(4,0){\p}

\put(-0.6,4.1){*}  
\put(-0.6,2.2){1}  \put(1.7,2.3){2} \put(4.1,2.2){3} 
 \put(-0.6,0.2){3}  \put(2.1,0.3){1} \put(4.2,0.2){4} 

\put(0,0){\line(1,0){4}}
\put(0,2){\line(1,0){4}}
\put(0,4){\line(1,0){4}}
\put(4,0){\line(0,1){4}}
\put(2,0){\line(0,1){2}}
\put(0,0){\line(0,1){4}}

\put(4,2){\line(-1,1){2}}
\put(2,2){\line(-1,1){2}}
\put(0,0){\line(1,1){2}}
\put(0,4){\line(2,-1){4}}
\put(2,2){\line(1,-1){2}}

}

\end{picture}
\caption{Four more subcases in the situation $S_1$ in Figure~\ref{conflict-situations}.} \label{S1-1-subcases}
\end{center}
\end{figure}

\noindent 
{\bf Situation $S_2$.} Note that in the case when  the  vertex $v$ is involved in a subgraph presented schematically to the left in Figure~\ref{S2-impossible} (the question marks there indicate that triangulations of respective squares are unknown to us), such a subgraph must be either $T_1$ or $T_2$. Indeed, otherwise, the subgraph must be one of the two graphs presented in Figure~\ref{S2-impossible}  to the right of the leftmost graph. However, in each of the two cases, we have a vertex labeled by * that would require colour 4 contradicting the fact that $v$ is supposed to be the only vertex coloured by 4. Similarly, the subcases presented in Figure~\ref{S2-2-impossible} are impossible since they contain a vertex, labeled by *, that requires colour 4.

\begin{figure}[h]
\begin{center}
\begin{picture}(20,4.5)

\put(0,0){

\put(0,4){\p} \put(2,4){\p} \put(4,4){\p} 
\put(0,2){\p} \put(2,2){\p} \put(4,2){\p} 
\put(0,0){\p} \put(2,0){\p} \put(4,0){\p}

\put(-0.6,2.2){2}  \put(1.4,2.3){1} \put(4.1,2.2){3} 
 \put(-0.6,0.2){1}  \put(1.4,0.2){4} \put(4.2,0.2){?} 

\put(0,0){\line(1,0){4}}
\put(0,2){\line(1,0){4}}
\put(0,4){\line(1,0){4}}
\put(4,0){\line(0,1){4}}
\put(2,0){\line(0,1){4}}
\put(0,0){\line(0,1){4}}

\put(2,0){\line(1,1){2}}
\put(2,0){\line(-1,1){2}}

\put(0.5,3){{\large ?}}
\put(3,3){{\large ?}}

}

\put(7,0){

\put(0,4){\p} \put(2,4){\p} \put(4,4){\p} 
\put(0,2){\p} \put(2,2){\p} \put(4,2){\p} 
\put(0,0){\p} \put(2,0){\p} \put(4,0){\p}

\put(-0.6,4.1){?}  \put(1.8,4.1){*} \put(4.1,4.2){?} 
\put(-0.6,2.2){2}  \put(1.4,2.3){1} \put(4.1,2.2){3} 
 \put(-0.6,0.2){1}  \put(1.4,0.2){4} \put(4.2,0.2){?} 

\put(0,0){\line(1,0){4}}
\put(0,2){\line(1,0){4}}
\put(0,4){\line(1,0){4}}
\put(4,0){\line(0,1){4}}
\put(2,0){\line(0,1){4}}
\put(0,0){\line(0,1){4}}

\put(2,0){\line(1,1){2}}
\put(2,0){\line(-1,1){2}}
\put(2,4){\line(-1,-1){2}}
\put(2,4){\line(1,-1){2}}

}

\put(14,0){

\put(0,4){\p} \put(2,4){\p} \put(4,4){\p} 
\put(0,2){\p} \put(2,2){\p} \put(4,2){\p} 
\put(0,0){\p} \put(2,0){\p} \put(4,0){\p}

\put(-0.6,4.2){3}  \put(1.9,4.2){2} \put(4.1,4.1){*} 
\put(-0.6,2.2){2}  \put(1.4,2.3){1} \put(4.1,2.2){3} 
 \put(-0.6,0.2){1}  \put(1.4,0.2){4} \put(4.2,0.2){?} 

\put(0,0){\line(1,0){4}}
\put(0,2){\line(1,0){4}}
\put(0,4){\line(1,0){4}}
\put(4,0){\line(0,1){4}}
\put(2,0){\line(0,1){4}}
\put(0,0){\line(0,1){4}}

\put(2,0){\line(1,1){2}}
\put(2,0){\line(-1,1){2}}
\put(2,2){\line(1,1){2}}
\put(2,2){\line(-1,1){2}}

}

\end{picture}
\caption{Impossible subcases in the situation $S_2$ in Figure~\ref{conflict-situations}.} \label{S2-impossible}
\end{center}
\end{figure}

\begin{figure}[h]
\begin{center}
\begin{picture}(13,4)

\put(0,0){

\put(0,4){\p} \put(2,4){\p} \put(4,4){\p} 
\put(0,2){\p} \put(2,2){\p} \put(4,2){\p} 
\put(0,0){\p} \put(2,0){\p} \put(4,0){\p}

\put(-0.6,4.2){?}  \put(1.8,4.2){?} \put(4.1,4.1){*} 
\put(-0.6,2.2){2}  \put(1.4,2.3){1} \put(4.1,2.2){3} 
 \put(-0.6,0.2){1}  \put(1.4,0.2){4} \put(4.2,0.2){?} 

\put(0,0){\line(1,0){4}}
\put(0,2){\line(1,0){4}}
\put(0,4){\line(1,0){4}}
\put(4,0){\line(0,1){4}}
\put(2,0){\line(0,1){2}}
\put(0,0){\line(0,1){4}}

\put(0,2){\line(1,-1){2}}
\put(4,2){\line(-1,-1){2}}
\put(0,2){\line(1,1){2}}
\put(0,2){\line(2,1){4}}
\put(2,2){\line(1,1){2}}

}

\put(7,0){

\put(0,4){\p} \put(2,4){\p} \put(4,4){\p} 
\put(0,2){\p} \put(2,2){\p} \put(4,2){\p} 
\put(0,0){\p} \put(2,0){\p} \put(4,0){\p}

\put(-0.6,4.2){*}  \put(1.8,4.2){?} \put(4.1,4.1){?} 
\put(-0.6,2.2){2}  \put(1.4,2.3){1} \put(4.1,2.2){3} 
 \put(-0.6,0.2){1}  \put(1.4,0.2){4} \put(4.2,0.2){?} 

\put(0,0){\line(1,0){4}}
\put(0,2){\line(1,0){4}}
\put(0,4){\line(1,0){4}}
\put(4,0){\line(0,1){4}}
\put(2,0){\line(0,1){2}}
\put(0,0){\line(0,1){4}}

\put(0,2){\line(1,-1){2}}
\put(4,2){\line(-1,-1){2}}
\put(2,2){\line(-1,1){2}}
\put(4,2){\line(-2,1){4}}
\put(4,2){\line(-1,1){2}}

}

\end{picture}
\caption{Two more impossible subcases in the situation $S_2$ in Figure~\ref{conflict-situations}.} \label{S2-2-impossible}
\end{center}
\end{figure}
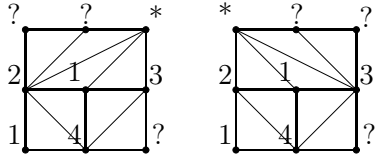

But then we have four possible subcases to be considered, which are presented in Figure~\ref{S2-subcases}, where colouring of vertices is shown. However, the graphs in Figure~\ref{S2-subcases}, from left to right, are, respectively:
\begin{itemize} 
\item $A_4$ flipped with respect to a horizontal line;
\item $A_2$ flipped with respect to a horizontal line;
\item $A_4$ rotated 180 degrees;
\item $A_2$ rotated 180 degrees.
\end{itemize}

\begin{figure}[h]
\begin{center}
\begin{picture}(36,4)

\put(0,0){

\put(0,4){\p} \put(2,4){\p} \put(4,4){\p} \put(6,4){\p}
\put(0,2){\p} \put(2,2){\p} \put(4,2){\p} \put(6,2){\p}
                    \put(2,0){\p} \put(4,0){\p} \put(6,0){\p}

\put(-0.6,4.2){3}  \put(1.9,4.2){2} \put(3.9,4.2){3} \put(6.2,4.2){2}
\put(-0.6,2.2){1} \put(1.6,2.3){2} \put(4.2,2.2){1} \put(6.2,2){3}
                          \put(1.4,0.2){1}    \put(3.4,0.2){4} \put(6.2,0.2){?} 

\put(2,0){\line(1,0){4}}
\put(0,2){\line(1,0){6}}
\put(0,4){\line(1,0){6}}
\put(6,0){\line(0,1){4}}
\put(4,0){\line(0,1){4}}
\put(2,0){\line(0,1){2}}
\put(0,2){\line(0,1){2}}

\put(4,2){\line(1,1){2}}
\put(6,2){\line(-1,-1){2}}
\put(0,2){\line(1,1){2}}
\put(0,2){\line(2,1){4}}
\put(2,2){\line(1,1){2}}
\put(2,2){\line(1,-1){2}}

}

\put(9,0){

\put(0,4){\p} \put(2,4){\p} \put(4,4){\p} \put(6,4){\p}
\put(0,2){\p} \put(2,2){\p} \put(4,2){\p} \put(6,2){\p}
                    \put(2,0){\p} \put(4,0){\p} \put(6,0){\p}

\put(-0.6,4.2){3}  \put(1.9,4.2){2} \put(3.9,4.2){3} \put(6.2,4.2){2}
\put(-0.6,2.2){1} \put(1.6,2.3){2} \put(4.2,2.2){1} \put(6.2,2){3}
                          \put(1.4,0.2){1}    \put(3.4,0.2){4} \put(6.2,0.2){?} 

\put(2,0){\line(1,0){4}}
\put(0,2){\line(1,0){6}}
\put(0,4){\line(1,0){6}}
\put(6,0){\line(0,1){4}}
\put(4,0){\line(0,1){4}}
\put(2,0){\line(0,1){2}}
\put(0,2){\line(0,1){2}}

\put(4,2){\line(1,1){2}}
\put(6,2){\line(-1,-1){2}}
\put(0,4){\line(1,-1){2}}
\put(0,4){\line(2,-1){4}}
\put(2,4){\line(1,-1){2}}
\put(2,2){\line(1,-1){2}}

}

\put(18,0){

\put(0,4){\p} \put(2,4){\p} \put(4,4){\p} \put(6,4){\p}
\put(0,2){\p} \put(2,2){\p} \put(4,2){\p} \put(6,2){\p}
\put(0,0){\p} \put(2,0){\p} \put(4,0){\p}

\put(-0.6,4.2){3}  \put(1.9,4.2){2} \put(3.9,4.2){3} \put(6.2,4.2){2}
\put(-0.6,2.2){2} \put(1.6,2.3){1} \put(3.8,2.2){3} \put(6.2,2){1}
 \put(-0.6,0.2){1}    \put(1.4,0.2){4} \put(4.2,0.2){?} 

\put(0,0){\line(1,0){4}}
\put(0,2){\line(1,0){6}}
\put(0,4){\line(1,0){6}}
\put(6,2){\line(0,1){2}}
\put(4,0){\line(0,1){2}}
\put(2,0){\line(0,1){4}}
\put(0,0){\line(0,1){4}}

\put(0,4){\line(1,-1){2}}
\put(0,2){\line(1,-1){2}}
\put(2,0){\line(1,1){2}}
\put(2,4){\line(1,-1){2}}
\put(2,4){\line(2,-1){4}}
\put(4,4){\line(1,-1){2}}

}

\put(27,0){

\put(0,4){\p} \put(2,4){\p} \put(4,4){\p} \put(6,4){\p}
\put(0,2){\p} \put(2,2){\p} \put(4,2){\p} \put(6,2){\p}
\put(0,0){\p} \put(2,0){\p} \put(4,0){\p}

\put(-0.6,4.2){3}  \put(1.9,4.2){2} \put(3.9,4.2){3} \put(6.2,4.2){2}
\put(-0.6,2.2){2} \put(1.6,2.3){1} \put(3.8,2.2){3} \put(6.2,2){1}
 \put(-0.6,0.2){1}    \put(1.4,0.2){4} \put(4.2,0.2){?} 

\put(0,0){\line(1,0){4}}
\put(0,2){\line(1,0){6}}
\put(0,4){\line(1,0){6}}
\put(6,2){\line(0,1){2}}
\put(4,0){\line(0,1){2}}
\put(2,0){\line(0,1){4}}
\put(0,0){\line(0,1){4}}

\put(0,4){\line(1,-1){2}}
\put(0,2){\line(1,-1){2}}
\put(2,2){\line(1,1){2}}
\put(2,2){\line(2,1){4}}
\put(2,0){\line(1,1){4}}

}

\end{picture}
\caption{Four subcases in the situation $S_2$ in Figure~\ref{conflict-situations}.} \label{S2-subcases}
\end{center}
\end{figure}

\noindent 
{\bf Situation $S_3$.} Note that in the case when  the  vertex $v$ is involved in a subgraph presented schematically to the left in Figure~\ref{S3-impossible} (the question marks there indicate that triangulations of respective squares are unknown to us), such a subgraph must be either $T_1$ or $T_2$. Indeed, otherwise, the subgraph must be one of the two graphs presented in Figure~\ref{S3-impossible}  to the right of the leftmost graph. However, in each of the two cases, we have a vertex labeled by * that would require colour 4 contradicting the fact that $v$ is supposed to be the only vertex coloured by 4. 

\begin{figure}[h]
\begin{center}
\begin{picture}(20,4)

\put(0,0){

\put(0,4){\p} \put(2,4){\p} \put(4,4){\p} 
\put(0,2){\p} \put(2,2){\p} \put(4,2){\p} 
\put(0,0){\p} \put(2,0){\p} \put(4,0){\p}

\put(-0.6,2.2){3}  \put(1.4,2.3){2} \put(4.1,2.2){3} 
 \put(-0.6,0.2){1}  \put(1.4,0.2){4} \put(4.2,0.2){?} 

\put(0,0){\line(1,0){4}}
\put(0,2){\line(1,0){4}}
\put(0,4){\line(1,0){4}}
\put(4,0){\line(0,1){4}}
\put(2,0){\line(0,1){4}}
\put(0,0){\line(0,1){4}}

\put(2,0){\line(1,1){2}}
\put(0,0){\line(1,1){2}}

\put(0.5,3){{\large ?}}
\put(3,3){{\large ?}}

}

\put(7,0){

\put(0,4){\p} \put(2,4){\p} \put(4,4){\p} 
\put(0,2){\p} \put(2,2){\p} \put(4,2){\p} 
\put(0,0){\p} \put(2,0){\p} \put(4,0){\p}

\put(-0.6,4.2){2}  \put(1.8,4.2){1} \put(4.1,4.1){*} 
\put(-0.6,2.2){3}  \put(1.4,2.3){2} \put(4.1,2.2){3} 
 \put(-0.6,0.2){1}  \put(1.4,0.2){4} \put(4.2,0.2){?} 

\put(0,0){\line(1,0){4}}
\put(0,2){\line(1,0){4}}
\put(0,4){\line(1,0){4}}
\put(4,0){\line(0,1){4}}
\put(2,0){\line(0,1){4}}
\put(0,0){\line(0,1){4}}

\put(2,0){\line(1,1){2}}
\put(0,0){\line(1,1){2}}
\put(2,2){\line(1,1){2}}
\put(0,2){\line(1,1){2}}

}

\put(14,0){

\put(0,4){\p} \put(2,4){\p} \put(4,4){\p} 
\put(0,2){\p} \put(2,2){\p} \put(4,2){\p} 
\put(0,0){\p} \put(2,0){\p} \put(4,0){\p}

\put(-0.6,4.2){1}  \put(1.9,4.1){*} \put(4.1,4.1){?} 
\put(-0.6,2.2){3}  \put(1.4,2.3){2} \put(4.1,2.2){3} 
 \put(-0.6,0.2){1}  \put(1.4,0.2){4} \put(4.2,0.2){?} 

\put(0,0){\line(1,0){4}}
\put(0,2){\line(1,0){4}}
\put(0,4){\line(1,0){4}}
\put(4,0){\line(0,1){4}}
\put(2,0){\line(0,1){4}}
\put(0,0){\line(0,1){4}}

\put(0,4){\line(1,-1){2}}
\put(2,4){\line(1,-1){2}}
\put(2,0){\line(1,1){2}}
\put(0,0){\line(1,1){2}}

}

\end{picture}
\caption{Impossible subcases in the situation $S_3$ in Figure~\ref{conflict-situations}.} \label{S3-impossible}
\end{center}
\end{figure}

But then we have six possible subcases to be considered: Four subcases are presented in Figure~\ref{S3-subcases}, where colouring of vertices is shown, and two subcases correspond to $B_1$ rotated 180 degrees, and $B_2$ flipped with respect to a horizontal line; $B_1$ and $B_2$ are presented in Figure~\ref{non-repr-minim-triang}, where the colours of vertices are omitted (the vertex $v$ corresponds to the middle vertex on the top row in each of the graphs). However, the graphs in Figure~\ref{S3-subcases} are, respectively, $A_7$, $A_8$, $A_6$ and $A_5$ flipped with respect to a vertical line.\\

\begin{figure}[h]
\begin{center}
\begin{picture}(35,4.5)

\put(0,0){

\put(0,4){\p} \put(2,4){\p} \put(4,4){\p} \put(6,4){\p}
\put(0,2){\p} \put(2,2){\p} \put(4,2){\p} \put(6,2){\p}
                     \put(2,0){\p} \put(4,0){\p} \put(6,0){\p}

\put(-0.6,4.2){1}  \put(1.9,4.2){3} \put(3.9,4.2){1} \put(6.2,4.2){2}
\put(-0.6,2.2){2} \put(1.7,2.2){3} \put(4.1,2.2){2} \put(6.2,2){3}
                         \put(1.4,0.2){1}    \put(3.4,0.2){4} \put(6.2,0.2){?} 

\put(2,0){\line(1,0){4}}
\put(0,2){\line(1,0){6}}
\put(0,4){\line(1,0){6}}
\put(6,0){\line(0,1){4}}
\put(4,0){\line(0,1){4}}
\put(2,0){\line(0,1){2}}
\put(0,2){\line(0,1){2}}

\put(2,0){\line(1,1){2}}
\put(4,0){\line(1,1){2}}
\put(4,4){\line(1,-1){2}}
\put(0,4){\line(1,-1){2}}
\put(0,4){\line(2,-1){4}}
\put(2,4){\line(1,-1){2}}

}

\put(9,0){

\put(0,4){\p} \put(2,4){\p} \put(4,4){\p} \put(6,4){\p}
\put(0,2){\p} \put(2,2){\p} \put(4,2){\p} \put(6,2){\p}
                     \put(2,0){\p} \put(4,0){\p} \put(6,0){\p}

\put(-0.6,4.2){1}  \put(1.9,4.2){3} \put(3.9,4.2){1} \put(6.2,4.2){2}
\put(-0.6,2.2){2} \put(1.6,2.2){3} \put(4.2,2.2){2} \put(6.2,2){3}
                         \put(1.4,0.2){1}    \put(3.4,0.2){4} \put(6.2,0.2){?} 

\put(2,0){\line(1,0){4}}
\put(0,2){\line(1,0){6}}
\put(0,4){\line(1,0){6}}
\put(6,0){\line(0,1){4}}
\put(4,0){\line(0,1){4}}
\put(2,0){\line(0,1){2}}
\put(0,2){\line(0,1){2}}

\put(2,0){\line(1,1){2}}
\put(4,0){\line(1,1){2}}
\put(4,4){\line(1,-1){2}}
\put(0,2){\line(1,1){2}}
\put(0,2){\line(2,1){4}}
\put(2,2){\line(1,1){2}}

}

\put(18,0){

\put(0,4){\p} \put(2,4){\p} \put(4,4){\p} \put(6,4){\p}
\put(0,2){\p} \put(2,2){\p} \put(4,2){\p} \put(6,2){\p}
\put(0,0){\p} \put(2,0){\p} \put(4,0){\p}

\put(-0.6,4.2){2}  \put(1.9,4.2){1} \put(3.9,4.2){3} \put(6.2,4.2){1}
\put(-0.6,2.2){3} \put(1.5,2.3){2} \put(3.7,2.2){3} \put(6.2,2){2}
 \put(-0.6,0.2){1}    \put(1.4,0.2){4} \put(4.2,0.2){?} 

\put(0,0){\line(1,0){4}}
\put(0,2){\line(1,0){6}}
\put(0,4){\line(1,0){6}}
\put(6,2){\line(0,1){2}}
\put(4,0){\line(0,1){2}}
\put(2,0){\line(0,1){4}}
\put(0,0){\line(0,1){4}}

\put(0,2){\line(1,1){2}}
\put(0,0){\line(1,1){2}}
\put(2,0){\line(1,1){2}}
\put(2,2){\line(1,1){2}}
\put(2,2){\line(2,1){4}}
\put(4,2){\line(1,1){2}}

}

\put(27,0){

\put(0,4){\p} \put(2,4){\p} \put(4,4){\p} \put(6,4){\p}
\put(0,2){\p} \put(2,2){\p} \put(4,2){\p} \put(6,2){\p}
\put(0,0){\p} \put(2,0){\p} \put(4,0){\p}

\put(-0.6,4.2){2}  \put(1.9,4.2){1} \put(3.9,4.2){3} \put(6.2,4.2){1}
\put(-0.6,2.2){3} \put(1.5,2.3){2} \put(3.7,2.2){3} \put(6.2,2){2}
 \put(-0.6,0.2){1}    \put(1.4,0.2){4} \put(4.2,0.2){?} 

\put(0,0){\line(1,0){4}}
\put(0,2){\line(1,0){6}}
\put(0,4){\line(1,0){6}}
\put(6,2){\line(0,1){2}}
\put(4,0){\line(0,1){2}}
\put(2,0){\line(0,1){4}}
\put(0,0){\line(0,1){4}}

\put(0,2){\line(1,1){2}}
\put(0,0){\line(1,1){2}}
\put(2,0){\line(1,1){2}}
\put(2,4){\line(1,-1){2}}
\put(2,4){\line(2,-1){4}}
\put(4,4){\line(1,-1){2}}

}

\end{picture}
\caption{Four subcases of the situation $S_3$ in Figure~\ref{conflict-situations}.} \label{S3-subcases}
\end{center}
\end{figure}
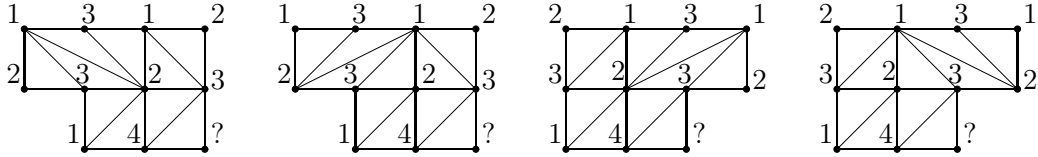

\noindent 
{\bf Situation $S_4$.} In this case, we have only two subcases, namely, $B_1$ and $B_2$ in Figure~\ref{non-repr-minim-triang}. Indeed, two other situations presented in Figure~\ref{S4-impossible} are impossible (the vertices labeled by * there require colour 4 contradicting our choice of $v$).\\

\begin{figure}[h]
\begin{center}
\begin{picture}(13,4)

\put(0,0){

\put(0,4){\p} \put(2,4){\p} \put(4,4){\p} 
\put(0,2){\p} \put(2,2){\p} \put(4,2){\p} 
\put(0,0){\p} \put(2,0){\p} \put(4,0){\p}

\put(-0.6,4.2){?}  \put(1.8,4.2){*} \put(4.1,4.1){?} 
\put(-0.6,2.2){3}  \put(1.4,2.3){1} \put(4.1,2.2){2} 
 \put(-0.6,0.2){2}  \put(1.7,0.2){1} \put(4.2,0.2){4} 

\put(0,0){\line(1,0){4}}
\put(0,2){\line(1,0){4}}
\put(0,4){\line(1,0){4}}
\put(4,0){\line(0,1){4}}
\put(2,2){\line(0,1){2}}
\put(0,0){\line(0,1){4}}

\put(2,4){\line(1,-1){2}}
\put(0,2){\line(1,1){2}}
\put(0,2){\line(1,-1){2}}
\put(0,2){\line(2,-1){4}}
\put(2,2){\line(1,-1){2}}

}

\put(7,0){

\put(0,4){\p} \put(2,4){\p} \put(4,4){\p} 
\put(0,2){\p} \put(2,2){\p} \put(4,2){\p} 
\put(0,0){\p} \put(2,0){\p} \put(4,0){\p}

\put(-0.6,4.2){2}  \put(1.8,4.2){3} \put(4.1,4.1){*} 
\put(-0.6,2.2){3}  \put(1.4,2.3){1} \put(4.1,2.2){2} 
 \put(-0.6,0.2){2}  \put(1.7,0.2){1} \put(4.2,0.2){4} 

\put(0,0){\line(1,0){4}}
\put(0,2){\line(1,0){4}}
\put(0,4){\line(1,0){4}}
\put(4,0){\line(0,1){4}}
\put(2,2){\line(0,1){2}}
\put(0,0){\line(0,1){4}}

\put(2,2){\line(1,1){2}}
\put(2,2){\line(-1,1){2}}
\put(0,2){\line(1,-1){2}}
\put(0,2){\line(2,-1){4}}
\put(2,2){\line(1,-1){2}}

}

\end{picture}
\caption{Impossible subcases in the situation $S_4$ in Figure~\ref{conflict-situations}.} \label{S4-impossible}
\end{center}
\end{figure}

\noindent 
{\bf Situation $S_5$.} In this case, we have two possible and two impossible subcases presented in Figure~\ref{S5-imp-subcases}. The rightmost two graphs in that figure are impossible because the vertices labeled by * require colour 4. On the other hand, the leftmost two graphs are 3-colourable, which forces us to consider their extensions, namely, larger subgraphs in the situation $S_5$.

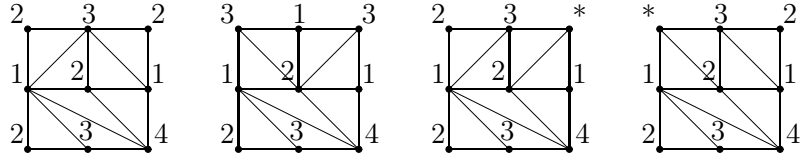
\begin{figure}[h]
\begin{center}
\begin{picture}(27,4)

\put(0,0){

\put(0,4){\p} \put(2,4){\p} \put(4,4){\p} 
\put(0,2){\p} \put(2,2){\p} \put(4,2){\p} 
\put(0,0){\p} \put(2,0){\p} \put(4,0){\p}

\put(-0.6,4.2){2}  \put(1.8,4.2){3} \put(4.1,4.2){2} 
\put(-0.6,2.2){1}  \put(1.4,2.3){2} \put(4.1,2.2){1} 
 \put(-0.6,0.2){2}  \put(1.7,0.3){3} \put(4.2,0.2){4} 

\put(0,0){\line(1,0){4}}
\put(0,2){\line(1,0){4}}
\put(0,4){\line(1,0){4}}
\put(4,0){\line(0,1){4}}
\put(2,2){\line(0,1){2}}
\put(0,0){\line(0,1){4}}

\put(2,4){\line(1,-1){2}}
\put(0,2){\line(1,1){2}}
\put(0,2){\line(1,-1){2}}
\put(0,2){\line(2,-1){4}}
\put(2,2){\line(1,-1){2}}

}

\put(7,0){

\put(0,4){\p} \put(2,4){\p} \put(4,4){\p} 
\put(0,2){\p} \put(2,2){\p} \put(4,2){\p} 
\put(0,0){\p} \put(2,0){\p} \put(4,0){\p}

\put(-0.6,4.2){3}  \put(1.8,4.2){1} \put(4.1,4.2){3} 
\put(-0.6,2.2){1}  \put(1.4,2.3){2} \put(4.1,2.2){1} 
 \put(-0.6,0.2){2}  \put(1.7,0.3){3} \put(4.2,0.2){4} 

\put(0,0){\line(1,0){4}}
\put(0,2){\line(1,0){4}}
\put(0,4){\line(1,0){4}}
\put(4,0){\line(0,1){4}}
\put(2,2){\line(0,1){2}}
\put(0,0){\line(0,1){4}}

\put(2,2){\line(1,1){2}}
\put(2,2){\line(-1,1){2}}
\put(0,2){\line(1,-1){2}}
\put(0,2){\line(2,-1){4}}
\put(2,2){\line(1,-1){2}}

}

\put(14,0){

\put(0,4){\p} \put(2,4){\p} \put(4,4){\p} 
\put(0,2){\p} \put(2,2){\p} \put(4,2){\p} 
\put(0,0){\p} \put(2,0){\p} \put(4,0){\p}

\put(-0.6,4.2){2}  \put(1.8,4.2){3} \put(4.1,4.1){*} 
\put(-0.6,2.2){1}  \put(1.4,2.3){2} \put(4.1,2.2){1} 
 \put(-0.6,0.2){2}  \put(1.7,0.3){3} \put(4.2,0.2){4} 

\put(0,0){\line(1,0){4}}
\put(0,2){\line(1,0){4}}
\put(0,4){\line(1,0){4}}
\put(4,0){\line(0,1){4}}
\put(2,2){\line(0,1){2}}
\put(0,0){\line(0,1){4}}

\put(2,2){\line(1,1){2}}
\put(0,2){\line(1,1){2}}
\put(0,2){\line(1,-1){2}}
\put(0,2){\line(2,-1){4}}
\put(2,2){\line(1,-1){2}}

}

\put(21,0){

\put(0,4){\p} \put(2,4){\p} \put(4,4){\p} 
\put(0,2){\p} \put(2,2){\p} \put(4,2){\p} 
\put(0,0){\p} \put(2,0){\p} \put(4,0){\p}

\put(-0.6,4.1){*}  \put(1.8,4.2){3} \put(4.1,4.2){2} 
\put(-0.6,2.2){1}  \put(1.4,2.3){2} \put(4.1,2.2){1} 
 \put(-0.6,0.2){2}  \put(1.7,0.3){3} \put(4.2,0.2){4} 

\put(0,0){\line(1,0){4}}
\put(0,2){\line(1,0){4}}
\put(0,4){\line(1,0){4}}
\put(4,0){\line(0,1){4}}
\put(2,2){\line(0,1){2}}
\put(0,0){\line(0,1){4}}

\put(4,2){\line(-1,1){2}}
\put(2,2){\line(-1,1){2}}
\put(0,2){\line(1,-1){2}}
\put(0,2){\line(2,-1){4}}
\put(2,2){\line(1,-1){2}}

}

\end{picture}
\caption{Possible and impossible subcases in the situation $S_5$ in Figure~\ref{conflict-situations}.} \label{S5-imp-subcases}
\end{center}
\end{figure}

Note that if there would be no other vertices to the left of the leftmost two graphs in Figure~\ref{S5-imp-subcases}, we could swap colours 2 and 3 in the bottom row to see that usage of colour 4 for $v$ is unnecessary. Thus, we can consider extensions of these graphs to the left. The leftmost graph in Figure~\ref{S5-imp-subcases} has two possible extensions recorded as the two leftmost graphs in Figure~\ref{S5-possible} (colours are omitted in that figure), and two impossible extensions (because of the issue with using colour 4 more than once indicated by *) --- see the leftmost two graphs in Figure~\ref{S5-impossible}.  Finally, next to the leftmost graph in Figure~\ref{S5-imp-subcases} has two possible extensions recorded as the two rightmost graphs in Figure~\ref{S5-possible} (colours are omitted in that figure), and two impossible extensions (because of the issue with using colour 4 more than once indicated by *) --- see the rightmost two graphs in Figure~\ref{S5-impossible}. However, the graphs in Figure~\ref{S5-possible}, from left to right, contain, respectively, the following graphs from Figure~\ref{non-repr-minim-triang} as induced subgraphs: 
\begin{itemize}
\item $A_7$ flipped with respect to a horizontal line;
\item $A_1$ flipped with respect to a vertical line;
\item $A_2$ flipped with respect to a vertical line;
\item $A_6$ rotated 180 degrees.
\end{itemize}
 
\begin{figure}[h]
\begin{center}
\begin{picture}(35,4.5)

\put(0,0){

\put(0,4){\p} \put(2,4){\p} \put(4,4){\p} \put(6,4){\p}
\put(0,2){\p} \put(2,2){\p} \put(4,2){\p} \put(6,2){\p}
\put(0,0){\p}  \put(2,0){\p} \put(4,0){\p} \put(6,0){\p}

\put(0,0){\line(1,0){6}}
\put(0,2){\line(1,0){6}}
\put(0,4){\line(1,0){6}}
\put(0,0){\line(0,1){4}}
\put(2,0){\line(0,1){4}}
\put(6,0){\line(0,1){4}}
\put(4,2){\line(0,1){2}}

\put(2,2){\line(1,-1){2}}
\put(2,2){\line(2,-1){4}}
\put(4,2){\line(1,-1){2}}

\put(0,2){\line(1,-1){2}}
\put(0,2){\line(1,1){2}}
\put(2,2){\line(1,1){2}}
\put(4,4){\line(1,-1){2}}

}

\put(9,0){

\put(0,4){\p} \put(2,4){\p} \put(4,4){\p} \put(6,4){\p}
\put(0,2){\p} \put(2,2){\p} \put(4,2){\p} \put(6,2){\p}
\put(0,0){\p}  \put(2,0){\p} \put(4,0){\p} \put(6,0){\p}

\put(0,0){\line(1,0){6}}
\put(0,2){\line(1,0){6}}
\put(0,4){\line(1,0){6}}
\put(0,0){\line(0,1){4}}
\put(2,0){\line(0,1){4}}
\put(6,0){\line(0,1){4}}
\put(4,2){\line(0,1){2}}

\put(2,2){\line(1,-1){2}}
\put(2,2){\line(2,-1){4}}
\put(4,2){\line(1,-1){2}}

\put(0,0){\line(1,1){2}}
\put(0,4){\line(1,-1){2}}
\put(2,2){\line(1,1){2}}
\put(4,4){\line(1,-1){2}}

}

\put(18,0){

\put(0,4){\p} \put(2,4){\p} \put(4,4){\p} \put(6,4){\p}
\put(0,2){\p} \put(2,2){\p} \put(4,2){\p} \put(6,2){\p}
\put(0,0){\p}  \put(2,0){\p} \put(4,0){\p} \put(6,0){\p}

\put(0,0){\line(1,0){6}}
\put(0,2){\line(1,0){6}}
\put(0,4){\line(1,0){6}}
\put(0,0){\line(0,1){4}}
\put(2,0){\line(0,1){4}}
\put(6,0){\line(0,1){4}}
\put(4,2){\line(0,1){2}}

\put(2,2){\line(1,-1){2}}
\put(2,2){\line(2,-1){4}}
\put(4,2){\line(1,-1){2}}

\put(0,0){\line(1,1){2}}
\put(0,2){\line(1,1){2}}
\put(2,4){\line(1,-1){2}}
\put(4,2){\line(1,1){2}}

}

\put(27,0){

\put(0,4){\p} \put(2,4){\p} \put(4,4){\p} \put(6,4){\p}
\put(0,2){\p} \put(2,2){\p} \put(4,2){\p} \put(6,2){\p}
\put(0,0){\p}  \put(2,0){\p} \put(4,0){\p} \put(6,0){\p}

\put(0,0){\line(1,0){6}}
\put(0,2){\line(1,0){6}}
\put(0,4){\line(1,0){6}}
\put(0,0){\line(0,1){4}}
\put(2,0){\line(0,1){4}}
\put(6,0){\line(0,1){4}}
\put(4,2){\line(0,1){2}}

\put(2,2){\line(1,-1){2}}
\put(2,2){\line(2,-1){4}}
\put(4,2){\line(1,-1){2}}

\put(0,2){\line(1,-1){2}}
\put(0,4){\line(1,-1){2}}
\put(2,4){\line(1,-1){2}}
\put(4,2){\line(1,1){2}}

}

\end{picture}
\caption{Possible extensions in the situation $S_5$ in Figure~\ref{conflict-situations}.} \label{S5-possible}
\end{center}
\end{figure}

\begin{figure}[h]
\begin{center}
\begin{picture}(35,4.5)

\put(0,0){

\put(0,4){\p} \put(2,4){\p} \put(4,4){\p} \put(6,4){\p}
\put(0,2){\p} \put(2,2){\p} \put(4,2){\p} \put(6,2){\p}
\put(0,0){\p}  \put(2,0){\p} \put(4,0){\p} \put(6,0){\p}

\put(-0.6,4.2){?}  \put(1.9,4.2){2} \put(3.9,4.2){3} \put(6.2,4.2){2}
\put(-0.6,2.1){*} \put(1.5,2.2){1} \put(4.1,2.2){2} \put(6.2,2){1}
\put(-0.6,0.2){3}  \put(1.4,0.2){2}    \put(3.7,0.2){3} \put(6.2,0.2){4} 

\put(0,0){\line(1,0){6}}
\put(0,2){\line(1,0){6}}
\put(0,4){\line(1,0){6}}
\put(6,0){\line(0,1){4}}
\put(4,2){\line(0,1){2}}
\put(2,0){\line(0,1){4}}
\put(0,0){\line(0,1){4}}

\put(2,2){\line(1,-1){2}}
\put(2,2){\line(2,-1){4}}
\put(4,2){\line(1,-1){2}}

\put(0,0){\line(1,1){4}}
\put(0,2){\line(1,1){2}}
\put(4,4){\line(1,-1){2}}

}

\put(9,0){

\put(0,4){\p} \put(2,4){\p} \put(4,4){\p} \put(6,4){\p}
\put(0,2){\p} \put(2,2){\p} \put(4,2){\p} \put(6,2){\p}
\put(0,0){\p}  \put(2,0){\p} \put(4,0){\p} \put(6,0){\p}

\put(-0.6,4.2){3}  \put(1.9,4.2){2} \put(3.9,4.2){3} \put(6.2,4.2){2}
\put(-0.6,2.1){*} \put(1.5,2.2){1} \put(4.1,2.2){2} \put(6.2,2){1}
\put(-0.6,0.2){?}  \put(1.4,0.2){2}    \put(3.7,0.2){3} \put(6.2,0.2){4}

\put(0,0){\line(1,0){6}}
\put(0,2){\line(1,0){6}}
\put(0,4){\line(1,0){6}}
\put(6,0){\line(0,1){4}}
\put(4,2){\line(0,1){2}}
\put(2,0){\line(0,1){4}}
\put(0,0){\line(0,1){4}}

\put(2,2){\line(1,-1){2}}
\put(2,2){\line(2,-1){4}}
\put(4,2){\line(1,-1){2}}

\put(4,4){\line(1,-1){2}}
\put(2,2){\line(1,1){2}}
\put(0,2){\line(1,-1){2}}
\put(0,4){\line(1,-1){2}}

}

\put(18,0){

\put(0,4){\p} \put(2,4){\p} \put(4,4){\p} \put(6,4){\p}
\put(0,2){\p} \put(2,2){\p} \put(4,2){\p} \put(6,2){\p}
\put(0,0){\p}  \put(2,0){\p} \put(4,0){\p} \put(6,0){\p}

\put(-0.6,4.2){2}  \put(1.9,4.2){3} \put(3.9,4.2){1} \put(6.2,4.2){3}
\put(-0.6,2.2){3} \put(1.5,2.2){1} \put(4.1,2.2){2} \put(6.2,2){1}
\put(-0.6,0.1){*}  \put(1.4,0.2){2}    \put(3.7,0.2){3} \put(6.2,0.2){4} 

\put(0,0){\line(1,0){6}}
\put(0,2){\line(1,0){6}}
\put(0,4){\line(1,0){6}}
\put(6,0){\line(0,1){4}}
\put(4,2){\line(0,1){2}}
\put(2,0){\line(0,1){4}}
\put(0,0){\line(0,1){4}}

\put(2,2){\line(1,-1){2}}
\put(2,2){\line(2,-1){4}}
\put(4,2){\line(1,-1){2}}

\put(2,4){\line(1,-1){2}}
\put(4,2){\line(1,1){2}}
\put(0,0){\line(1,1){2}}
\put(0,4){\line(1,-1){2}}

}

\put(27,0){

\put(0,4){\p} \put(2,4){\p} \put(4,4){\p} \put(6,4){\p}
\put(0,2){\p} \put(2,2){\p} \put(4,2){\p} \put(6,2){\p}
\put(0,0){\p}  \put(2,0){\p} \put(4,0){\p} \put(6,0){\p}

\put(-0.6,4.2){?}  \put(1.9,4.2){3} \put(3.9,4.2){1} \put(6.2,4.2){3}
\put(-0.6,2.1){*} \put(1.5,2.2){1} \put(4.1,2.2){2} \put(6.2,2){1}
\put(-0.6,0.2){?}  \put(1.4,0.2){2}    \put(3.7,0.2){3} \put(6.2,0.2){4} 

\put(0,0){\line(1,0){6}}
\put(0,2){\line(1,0){6}}
\put(0,4){\line(1,0){6}}
\put(6,0){\line(0,1){4}}
\put(4,2){\line(0,1){2}}
\put(2,0){\line(0,1){4}}
\put(0,0){\line(0,1){4}}

\put(2,2){\line(1,-1){2}}
\put(2,2){\line(2,-1){4}}
\put(4,2){\line(1,-1){2}}

\put(2,4){\line(1,-1){2}}
\put(4,2){\line(1,1){2}}
\put(0,2){\line(1,-1){2}}
\put(0,2){\line(1,1){2}}

}

\end{picture}
\caption{Impossible extensions in the situation $S_5$ in Figure~\ref{conflict-situations}.} \label{S5-impossible}
\end{center}
\end{figure}

\noindent 
{\bf Situation $S_6$.} This situation is the same as situation $S_4$, since in both cases we have the same graph with three different colours in the top row. \\

\noindent 
{\bf Situation $S_7$.} Note that the subcases in Figure~\ref{S7-impossible} are not possible in this case, because the vertices labeled by * require usage of colour 4 contradicting the choice of the vertex $v$. Thus, in this situation, we only have two subcases obtained from $A_6$ and $A_7$ in Figure~\ref{non-repr-minim-triang}, respectively, by flipping with respect to a horizontal line, and rotating 180 degrees.\\

\begin{figure}[h]
\begin{center}
\begin{picture}(20,4.5)

\put(0,0){

                    \put(2,4){\p} \put(4,4){\p} \put(6,4){\p}
\put(0,2){\p} \put(2,2){\p} \put(4,2){\p} \put(6,2){\p}
\put(0,0){\p} \put(2,0){\p} \put(4,0){\p} \put(6,0){\p}

                \put(1.5,4.2){?} \put(3.9,4.1){*} \put(6.2,4.2){?}
\put(-0.6,2.2){2} \put(1.4,2.3){1} \put(4.1,2.2){2} \put(6.2,2){3}
 \put(-0.6,0.2){3}    \put(1.4,0.2){1} \put(3.4,0.2){4} \put(6.2,0.2){?} 

\put(0,0){\line(1,0){6}}
\put(0,2){\line(1,0){6}}
\put(2,4){\line(1,0){4}}
\put(6,0){\line(0,1){4}}
\put(4,0){\line(0,1){4}}
\put(2,2){\line(0,1){2}}
\put(0,0){\line(0,1){2}}

\put(0,0){\line(1,1){2}}
\put(0,0){\line(2,1){4}}
\put(2,0){\line(1,1){2}}
\put(4,0){\line(1,1){2}}

\put(2,2){\line(1,1){2}}
\put(4,4){\line(1,-1){2}}

}

\put(9,0){

                    \put(2,4){\p} \put(4,4){\p} \put(6,4){\p}
\put(0,2){\p} \put(2,2){\p} \put(4,2){\p} \put(6,2){\p}
\put(0,0){\p} \put(2,0){\p} \put(4,0){\p} \put(6,0){\p}

                \put(1.5,4.2){3} \put(3.9,4.2){1} \put(6.2,4.1){*}
\put(-0.6,2.2){2} \put(1.4,2.3){1} \put(4.1,2.2){2} \put(6.2,2){3}
 \put(-0.6,0.2){3}    \put(1.4,0.2){1} \put(3.4,0.2){4} \put(6.2,0.2){?} 

\put(0,0){\line(1,0){6}}
\put(0,2){\line(1,0){6}}
\put(2,4){\line(1,0){4}}
\put(6,0){\line(0,1){4}}
\put(4,0){\line(0,1){4}}
\put(2,2){\line(0,1){2}}
\put(0,0){\line(0,1){2}}

\put(0,0){\line(1,1){2}}
\put(0,0){\line(2,1){4}}
\put(2,0){\line(1,1){2}}
\put(4,0){\line(1,1){2}}
\put(2,4){\line(1,-1){2}}
\put(4,2){\line(1,1){2}}

}

\end{picture}
\caption{Impossible subcases in the situation $S_7$ in Figure~\ref{conflict-situations}.} \label{S7-impossible}
\end{center}
\end{figure}

\noindent 
{\bf Situation $S_8$.} We either have a copy of $B_1$ or $B_2$ flipped with respect to a vertical line, or we have one of the two subcases presented in Figure~\ref{S8-subcases}. Note that we can assume  in Figure~\ref{S8-subcases} that the vertices coloured by 1 (without loss of generality) are indeed of the same colour, since otherwise we would have the situation similar to that in Figure~\ref{S4-impossible}, which is impossible. However, that means that the vertex $v$ coloured in 4 is not in the leftmost column, and we can consider eight subcases of extending the graphs in Figure~\ref{S8-subcases} to the left: four extensions of the leftmost (resp., rightmost) graph are presented in Figure~\ref{S8-ext-1}  (resp., Figure~\ref{S8-ext-2}).

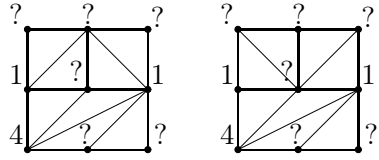
\begin{figure}[h]
\begin{center}
\begin{picture}(13,4)

\put(0,0){

\put(0,4){\p} \put(2,4){\p} \put(4,4){\p} 
\put(0,2){\p} \put(2,2){\p} \put(4,2){\p} 
\put(0,0){\p} \put(2,0){\p} \put(4,0){\p}

\put(-0.6,4.2){?}  \put(1.8,4.2){?} \put(4.1,4.1){?} 
\put(-0.6,2.2){1}  \put(1.4,2.3){?} \put(4.1,2.2){1} 
 \put(-0.6,0.2){4}  \put(1.7,0.2){?} \put(4.2,0.2){?} 

\put(0,0){\line(1,0){4}}
\put(0,2){\line(1,0){4}}
\put(0,4){\line(1,0){4}}
\put(4,0){\line(0,1){4}}
\put(2,2){\line(0,1){2}}
\put(0,0){\line(0,1){4}}

\put(2,4){\line(1,-1){2}}
\put(0,2){\line(1,1){2}}
\put(0,0){\line(1,1){2}}
\put(0,0){\line(2,1){4}}
\put(2,0){\line(1,1){2}}

}

\put(7,0){

\put(0,4){\p} \put(2,4){\p} \put(4,4){\p} 
\put(0,2){\p} \put(2,2){\p} \put(4,2){\p} 
\put(0,0){\p} \put(2,0){\p} \put(4,0){\p}

\put(-0.6,4.2){?}  \put(1.8,4.2){?} \put(4.1,4.1){?} 
\put(-0.6,2.2){1}  \put(1.4,2.3){?} \put(4.1,2.2){1} 
 \put(-0.6,0.2){4}  \put(1.7,0.2){?} \put(4.2,0.2){?} 

\put(0,0){\line(1,0){4}}
\put(0,2){\line(1,0){4}}
\put(0,4){\line(1,0){4}}
\put(4,0){\line(0,1){4}}
\put(2,2){\line(0,1){2}}
\put(0,0){\line(0,1){4}}

\put(2,2){\line(1,1){2}}
\put(2,2){\line(-1,1){2}}
\put(0,0){\line(1,1){2}}
\put(0,0){\line(2,1){4}}
\put(2,0){\line(1,1){2}}

}

\end{picture}
\caption{Subcases of interest in the situation $S_8$ in Figure~\ref{conflict-situations}.} \label{S8-subcases}
\end{center}
\end{figure}

Regarding the four graphs in Figure~\ref{S8-ext-1} considered from left to right one by one:
\begin{itemize}
\item Contains a copy of $A_8$ flipped with respect to a horizontal line.
\item Contains a copy of $A_3$ flipped with respect to a vertical line.
\item Contains a vertex marked by * that requires colour 4; thus this situation is impossible because of our choice of the vertex $v$.
\item If the bottom leftmost vertex coloured by 2 would be in the leftmost column, we could colour it by 1, and usage of colour 4 for colouring $v$ would be unnecessary. Thus, the graph can be extended to the left, and out of four possible extensions, two contain (rotated) copies of $T_1$ or $T_2$, while the other two presented in Figure~\ref{S8-ext-ext-1} are simply impossible, because they contain a vertex, marked by *, requiring colour 4. 
\end{itemize} 

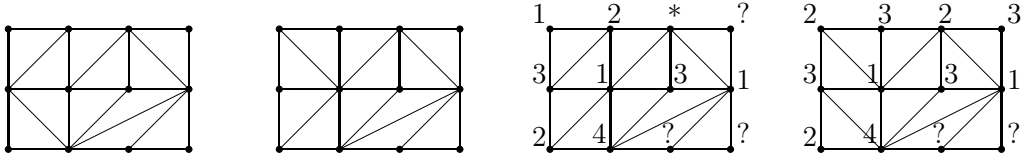
\begin{figure}[h]
\begin{center}
\begin{picture}(35,4.5)

\put(0,0){

\put(0,4){\p} \put(2,4){\p} \put(4,4){\p} \put(6,4){\p}
\put(0,2){\p} \put(2,2){\p} \put(4,2){\p} \put(6,2){\p}
\put(0,0){\p}  \put(2,0){\p} \put(4,0){\p} \put(6,0){\p}

\put(0,0){\line(1,0){6}}
\put(0,2){\line(1,0){6}}
\put(0,4){\line(1,0){6}}
\put(6,0){\line(0,1){4}}
\put(4,2){\line(0,1){2}}
\put(2,0){\line(0,1){4}}
\put(0,0){\line(0,1){4}}

\put(2,0){\line(1,1){2}}
\put(2,0){\line(2,1){4}}
\put(4,0){\line(1,1){2}}

\put(0,2){\line(1,-1){2}}
\put(0,2){\line(1,1){2}}
\put(2,2){\line(1,1){2}}
\put(4,4){\line(1,-1){2}}

}

\put(9,0){

\put(0,4){\p} \put(2,4){\p} \put(4,4){\p} \put(6,4){\p}
\put(0,2){\p} \put(2,2){\p} \put(4,2){\p} \put(6,2){\p}
\put(0,0){\p}  \put(2,0){\p} \put(4,0){\p} \put(6,0){\p}

\put(0,0){\line(1,0){6}}
\put(0,2){\line(1,0){6}}
\put(0,4){\line(1,0){6}}
\put(6,0){\line(0,1){4}}
\put(4,2){\line(0,1){2}}
\put(2,0){\line(0,1){4}}
\put(0,0){\line(0,1){4}}

\put(2,0){\line(1,1){2}}
\put(2,0){\line(2,1){4}}
\put(4,0){\line(1,1){2}}

\put(0,0){\line(1,1){2}}
\put(0,4){\line(1,-1){2}}
\put(2,2){\line(1,1){2}}
\put(4,4){\line(1,-1){2}}

}

\put(18,0){

\put(0,4){\p} \put(2,4){\p} \put(4,4){\p} \put(6,4){\p}
\put(0,2){\p} \put(2,2){\p} \put(4,2){\p} \put(6,2){\p}
\put(0,0){\p}  \put(2,0){\p} \put(4,0){\p} \put(6,0){\p}

\put(-0.6,4.2){1}  \put(1.9,4.2){2} \put(3.9,4.1){*} \put(6.2,4.2){?}
\put(-0.6,2.2){3} \put(1.5,2.2){1} \put(4.1,2.2){3} \put(6.2,2){1}
\put(-0.6,0.1){2}  \put(1.4,0.2){4}    \put(3.7,0.2){?} \put(6.2,0.2){?} 

\put(0,0){\line(1,0){6}}
\put(0,2){\line(1,0){6}}
\put(0,4){\line(1,0){6}}
\put(6,0){\line(0,1){4}}
\put(4,2){\line(0,1){2}}
\put(2,0){\line(0,1){4}}
\put(0,0){\line(0,1){4}}

\put(2,0){\line(1,1){2}}
\put(2,0){\line(2,1){4}}
\put(4,0){\line(1,1){2}}

\put(0,0){\line(1,1){2}}
\put(0,2){\line(1,1){2}}
\put(2,2){\line(1,1){2}}
\put(4,4){\line(1,-1){2}}

}

\put(27,0){

\put(0,4){\p} \put(2,4){\p} \put(4,4){\p} \put(6,4){\p}
\put(0,2){\p} \put(2,2){\p} \put(4,2){\p} \put(6,2){\p}
\put(0,0){\p}  \put(2,0){\p} \put(4,0){\p} \put(6,0){\p}

\put(-0.6,4.2){2}  \put(1.9,4.2){3} \put(3.9,4.2){2} \put(6.2,4.2){3}
\put(-0.6,2.2){3} \put(1.5,2.2){1} \put(4.1,2.2){3} \put(6.2,2){1}
\put(-0.6,0.1){2}  \put(1.4,0.2){4}    \put(3.7,0.2){?} \put(6.2,0.2){?} 

\put(0,0){\line(1,0){6}}
\put(0,2){\line(1,0){6}}
\put(0,4){\line(1,0){6}}
\put(6,0){\line(0,1){4}}
\put(4,2){\line(0,1){2}}
\put(2,0){\line(0,1){4}}
\put(0,0){\line(0,1){4}}

\put(2,0){\line(1,1){2}}
\put(2,0){\line(2,1){4}}
\put(4,0){\line(1,1){2}}

\put(0,2){\line(1,-1){2}}
\put(0,4){\line(1,-1){2}}
\put(2,2){\line(1,1){2}}
\put(4,4){\line(1,-1){2}}

}

\end{picture}
\caption{All possible extensions to the left of the leftmost graph in Figure~\ref{S8-subcases}.} \label{S8-ext-1}
\end{center}
\end{figure}

Regarding the four graphs in Figure~\ref{S8-ext-2} considered from left to right one by one:
\begin{itemize}
\item Contains a copy of $A_4$ flipped with respect to a vertical line.
\item Contains a copy of $A_5$ rotated 180 degrees.
\item Contains a vertex marked by * that requires colour 4; thus this situation is impossible because of our choice of the vertex $v$.
\item If the bottom leftmost vertex coloured by 2 would be in the leftmost column, we could colour it by 1, and usage of colour 4 for colouring $v$ would be unnecessary. Thus, the graph can be extended to the left, and out of four possible extensions, two contain (rotated) copies of $T_2$, while the other two presented in Figure~\ref{S8-ext-ext-2} are simply impossible, because they contain a vertex, marked by *, requiring colour 4. 
\end{itemize} 

\begin{figure}[h]
\begin{center}
\begin{picture}(35,4.5)

\put(0,0){

\put(0,4){\p} \put(2,4){\p} \put(4,4){\p} \put(6,4){\p}
\put(0,2){\p} \put(2,2){\p} \put(4,2){\p} \put(6,2){\p}
\put(0,0){\p}  \put(2,0){\p} \put(4,0){\p} \put(6,0){\p}

\put(0,0){\line(1,0){6}}
\put(0,2){\line(1,0){6}}
\put(0,4){\line(1,0){6}}
\put(6,0){\line(0,1){4}}
\put(4,2){\line(0,1){2}}
\put(2,0){\line(0,1){4}}
\put(0,0){\line(0,1){4}}

\put(2,0){\line(1,1){2}}
\put(2,0){\line(2,1){4}}
\put(4,0){\line(1,1){2}}

\put(0,0){\line(1,1){2}}
\put(0,2){\line(1,1){2}}
\put(2,4){\line(1,-1){2}}
\put(4,2){\line(1,1){2}}

}

\put(9,0){

\put(0,4){\p} \put(2,4){\p} \put(4,4){\p} \put(6,4){\p}
\put(0,2){\p} \put(2,2){\p} \put(4,2){\p} \put(6,2){\p}
\put(0,0){\p}  \put(2,0){\p} \put(4,0){\p} \put(6,0){\p}

\put(0,0){\line(1,0){6}}
\put(0,2){\line(1,0){6}}
\put(0,4){\line(1,0){6}}
\put(6,0){\line(0,1){4}}
\put(4,2){\line(0,1){2}}
\put(2,0){\line(0,1){4}}
\put(0,0){\line(0,1){4}}

\put(2,0){\line(1,1){2}}
\put(2,0){\line(2,1){4}}
\put(4,0){\line(1,1){2}}

\put(0,2){\line(1,-1){2}}
\put(0,4){\line(1,-1){2}}
\put(2,4){\line(1,-1){2}}
\put(4,2){\line(1,1){2}}

}

\put(18,0){

\put(0,4){\p} \put(2,4){\p} \put(4,4){\p} \put(6,4){\p}
\put(0,2){\p} \put(2,2){\p} \put(4,2){\p} \put(6,2){\p}
\put(0,0){\p}  \put(2,0){\p} \put(4,0){\p} \put(6,0){\p}

\put(-0.6,4.2){2}  \put(1.9,4.1){*} \put(3.9,4.2){1} \put(6.2,4.2){2}
\put(-0.6,2.2){3} \put(1.5,2.2){1} \put(4.1,2.2){3} \put(6.2,2){1}
\put(-0.6,0.1){2}  \put(1.4,0.2){4}    \put(3.7,0.2){?} \put(6.2,0.2){?} 

\put(0,0){\line(1,0){6}}
\put(0,2){\line(1,0){6}}
\put(0,4){\line(1,0){6}}
\put(6,0){\line(0,1){4}}
\put(4,2){\line(0,1){2}}
\put(2,0){\line(0,1){4}}
\put(0,0){\line(0,1){4}}

\put(2,0){\line(1,1){2}}
\put(2,0){\line(2,1){4}}
\put(4,0){\line(1,1){2}}

\put(0,0){\line(1,1){2}}
\put(0,4){\line(1,-1){2}}
\put(2,4){\line(1,-1){2}}
\put(4,2){\line(1,1){2}}

}

\put(27,0){

\put(0,4){\p} \put(2,4){\p} \put(4,4){\p} \put(6,4){\p}
\put(0,2){\p} \put(2,2){\p} \put(4,2){\p} \put(6,2){\p}
\put(0,0){\p}  \put(2,0){\p} \put(4,0){\p} \put(6,0){\p}

\put(-0.6,4.2){1}  \put(1.9,4.2){2} \put(3.9,4.2){1} \put(6.2,4.2){2}
\put(-0.6,2.2){3} \put(1.5,2.2){1} \put(4.1,2.2){3} \put(6.2,2){1}
\put(-0.6,0.1){2}  \put(1.4,0.2){4}    \put(3.7,0.2){?} \put(6.2,0.2){?} 

\put(0,0){\line(1,0){6}}
\put(0,2){\line(1,0){6}}
\put(0,4){\line(1,0){6}}
\put(6,0){\line(0,1){4}}
\put(4,2){\line(0,1){2}}
\put(2,0){\line(0,1){4}}
\put(0,0){\line(0,1){4}}

\put(2,0){\line(1,1){2}}
\put(2,0){\line(2,1){4}}
\put(4,0){\line(1,1){2}}

\put(0,2){\line(1,-1){2}}
\put(0,2){\line(1,1){2}}
\put(2,4){\line(1,-1){2}}
\put(4,2){\line(1,1){2}}

}

\end{picture}
\caption{All possible extensions to the left of the rightmost graph in Figure~\ref{S8-subcases}.} \label{S8-ext-2}
\end{center}
\end{figure}
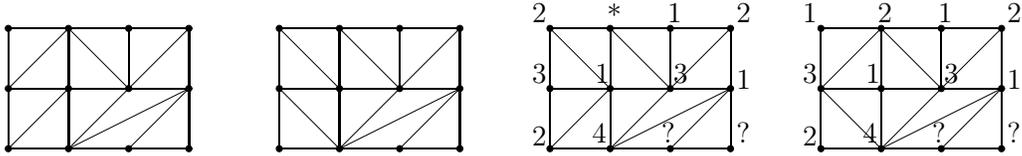

\begin{figure}[h]
\begin{center}
\begin{picture}(20,4.5)

\put(0,0){

\put(0,4){\p} \put(2,4){\p} \put(4,4){\p} \put(6,4){\p} \put(8,4){\p}
\put(0,2){\p} \put(2,2){\p} \put(4,2){\p} \put(6,2){\p} \put(8,2){\p}
\put(0,0){\p}  \put(2,0){\p} \put(4,0){\p} \put(6,0){\p} \put(8,0){\p}

\put(-0.6,4.2){3}  \put(1.4,4.2){2}  \put(3.9,4.2){3} \put(5.9,4.2){2} \put(8.2,4.2){3}
\put(-0.6,2.2){1} \put(1.4,2.2){3} \put(3.5,2.2){1} \put(6.1,2.2){3} \put(8.2,2){1}
\put(-0.6,0.1){*}  \put(1.4,0.2){2}  \put(3.4,0.2){4}    \put(5.7,0.2){?} \put(8.2,0.2){?} 

\put(0,0){\line(1,0){8}}
\put(0,2){\line(1,0){8}}
\put(0,4){\line(1,0){8}}
\put(8,0){\line(0,1){4}}
\put(6,2){\line(0,1){2}}
\put(4,0){\line(0,1){4}}
\put(2,0){\line(0,1){4}}
\put(0,0){\line(0,1){4}}

\put(4,0){\line(1,1){2}}
\put(4,0){\line(2,1){4}}
\put(6,0){\line(1,1){2}}

\put(2,2){\line(1,-1){2}}
\put(2,4){\line(1,-1){2}}
\put(4,2){\line(1,1){2}}
\put(6,4){\line(1,-1){2}}
\put(0,0){\line(1,1){2}}
\put(0,2){\line(1,1){2}}

}

\put(11,0){

\put(0,4){\p} \put(2,4){\p} \put(4,4){\p} \put(6,4){\p} \put(8,4){\p}
\put(0,2){\p} \put(2,2){\p} \put(4,2){\p} \put(6,2){\p} \put(8,2){\p}
\put(0,0){\p}  \put(2,0){\p} \put(4,0){\p} \put(6,0){\p} \put(8,0){\p}

\put(-0.6,4.1){*}  \put(1.4,4.2){2}  \put(3.9,4.2){3} \put(5.9,4.2){2} \put(8.2,4.2){3}
\put(-0.6,2.2){1} \put(2.1,2.2){3} \put(3.5,2.2){1} \put(6.1,2.2){3} \put(8.2,2){1}
\put(-0.6,0.2){3}  \put(2.1,0.2){2}  \put(3.4,0.2){4}    \put(5.7,0.2){?} \put(8.2,0.2){?} 

\put(0,0){\line(1,0){8}}
\put(0,2){\line(1,0){8}}
\put(0,4){\line(1,0){8}}
\put(8,0){\line(0,1){4}}
\put(6,2){\line(0,1){2}}
\put(4,0){\line(0,1){4}}
\put(2,0){\line(0,1){4}}
\put(0,0){\line(0,1){4}}

\put(4,0){\line(1,1){2}}
\put(4,0){\line(2,1){4}}
\put(6,0){\line(1,1){2}}

\put(2,2){\line(1,-1){2}}
\put(2,4){\line(1,-1){2}}
\put(4,2){\line(1,1){2}}
\put(6,4){\line(1,-1){2}}
\put(0,2){\line(1,-1){2}}
\put(0,4){\line(1,-1){2}}

}

\end{picture}
\caption{Possible $T_1$, $T_2$-avoiding extensions to the left of the rightmost graph in Figure~\ref{S8-ext-1}.} \label{S8-ext-ext-1}
\end{center}
\end{figure}

\begin{figure}[h]
\begin{center}
\begin{picture}(20,4.5)

\put(0,0){

\put(0,4){\p} \put(2,4){\p} \put(4,4){\p} \put(6,4){\p} \put(8,4){\p}
\put(0,2){\p} \put(2,2){\p} \put(4,2){\p} \put(6,2){\p} \put(8,2){\p}
\put(0,0){\p}  \put(2,0){\p} \put(4,0){\p} \put(6,0){\p} \put(8,0){\p}

\put(-0.6,4.1){*}  \put(1.4,4.2){1}  \put(3.9,4.2){2} \put(5.9,4.2){1} \put(8.2,4.2){2}
\put(-0.6,2.2){2} \put(1.4,2.2){3} \put(3.5,2.2){1} \put(6.1,2.2){3} \put(8.2,2){1}
\put(-0.6,0.2){1}  \put(1.4,0.2){2}  \put(3.4,0.2){4}    \put(5.7,0.2){?} \put(8.2,0.2){?} 

\put(0,0){\line(1,0){8}}
\put(0,2){\line(1,0){8}}
\put(0,4){\line(1,0){8}}
\put(8,0){\line(0,1){4}}
\put(6,2){\line(0,1){2}}
\put(4,0){\line(0,1){4}}
\put(2,0){\line(0,1){4}}
\put(0,0){\line(0,1){4}}

\put(4,0){\line(1,1){2}}
\put(4,0){\line(2,1){4}}
\put(6,0){\line(1,1){2}}

\put(2,2){\line(1,-1){2}}
\put(2,2){\line(1,1){2}}
\put(4,4){\line(1,-1){2}}
\put(6,2){\line(1,1){2}}
\put(0,0){\line(1,1){2}}
\put(0,4){\line(1,-1){2}}

}

\put(11,0){

\put(0,4){\p} \put(2,4){\p} \put(4,4){\p} \put(6,4){\p} \put(8,4){\p}
\put(0,2){\p} \put(2,2){\p} \put(4,2){\p} \put(6,2){\p} \put(8,2){\p}
\put(0,0){\p}  \put(2,0){\p} \put(4,0){\p} \put(6,0){\p} \put(8,0){\p}

\put(-0.6,4.2){?}  \put(1.4,4.2){1}  \put(3.9,4.2){2} \put(5.9,4.2){1} \put(8.2,4.2){2}
\put(-0.6,2.1){*} \put(1.5,2.2){3} \put(3.5,2.2){1} \put(6.1,2.2){3} \put(8.2,2){1}
\put(-0.6,0.2){?}  \put(2.1,0.2){2}  \put(3.4,0.2){4}    \put(5.7,0.2){?} \put(8.2,0.2){?} 

\put(0,0){\line(1,0){8}}
\put(0,2){\line(1,0){8}}
\put(0,4){\line(1,0){8}}
\put(8,0){\line(0,1){4}}
\put(6,2){\line(0,1){2}}
\put(4,0){\line(0,1){4}}
\put(2,0){\line(0,1){4}}
\put(0,0){\line(0,1){4}}

\put(4,0){\line(1,1){2}}
\put(4,0){\line(2,1){4}}
\put(6,0){\line(1,1){2}}

\put(2,2){\line(1,-1){2}}
\put(2,2){\line(1,1){2}}
\put(4,4){\line(1,-1){2}}
\put(6,2){\line(1,1){2}}
\put(0,2){\line(1,-1){2}}
\put(0,2){\line(1,1){2}}

}

\end{picture}
\caption{Possible $T_2$-avoiding extensions to the left of the rightmost graph in Figure~\ref{S8-ext-2}.} \label{S8-ext-ext-2}
\end{center}
\end{figure}

Thus, we proved, that if a triangulation of a rectangular polyomino with a single domino tile is not 3-colourable, then it must contain a graph from $S$ as an induced subgraph. \end{proof}

Theorem~\ref{main-thm} now follows from Lemma~\ref{lemma-grid} taking into account the fact that all graphs in $S$ are non-word-representable.

\section{Directions of further research}\label{final-remarks-sec}  

Natural directions of further research are as follows. 

\begin{itemize}
\item Does Theorem~\ref{main-thm} hold if we allow more than one domino tile?  Note that using current approach, the analysis involved seems to require too many cases to be considered. In either case, the problem has the following particular subproblem:
\begin{itemize}
\item Does Theorem~\ref{main-thm} hold if we allow just horizontal domino tiles (equivalently, just vertical domino tiles)?
\end{itemize}
\item Does Theorem~\ref{main-thm} hold if the domino tile is placed on other, not necessarily rectangular, convex polyominoes? What about allowing more than one domino tile to be used? Note that the same counterexample as in~\cite{Akrobotu} can be used to show that Theorem~\ref{main-thm} does not hold for non-convex polyominoes. 
\end{itemize}

\section*{Acknowledgements}

The authors are grateful to the anonymous referee for providing very useful comments that improved the presentation of the paper.

\end{document}